\theoremstyle{plain} 
\newtheorem{thm}{Theorem}
\newtheorem{cor}[thm]{Corollary}
\newtheorem{lem}[thm]{Lemma}
\newtheorem{prop}[thm]{Proposition}
\theoremstyle{definition}
\newtheorem{defn}[thm]{Definition}
\newtheorem{ex}[thm]{Example}
\theoremstyle{remark}
\numberwithin{equation}{section}
\title{On level 1 cyclotomic KLR algebras of type $A_{n}^{(1)}$}
\author{Masahide KONISHI} 
\address{
\begin{flushleft}
         \hspace{0.3cm}  Graduate School of Mathematics\\
         \hspace{0.3cm}  Nagoya University \\
         \hspace{0.3cm}  Frocho, Chikusaku, Nagoya 464-8602 JAPAN\\
\end{flushleft}
}
\email{m10021t@math.nagoya-u.ac.jp} 
\begin{document}

\maketitle

\begin{abstract}

Cyclotomic KLR algebras are defined by fixing $\alpha$ and $\Gamma$, 
two weights on vertices of a quiver. 
We set a quiver $A_{n}^{(1)}$ type and fix $\alpha$ and $\Gamma$ in a special (but essential) case, 
and then show that there are systematic changes of structures. 

\end{abstract}


\section{Introduction}
Khovanov-Lauda-Rouquier algebra (KLR algebra for short) is defined by Khovanov and Lauda,
and independently Rouquier in 2008. 
Generators and relations are obtained from a quiver $\Gamma$ and a weight $\alpha$ on its vertices. 
We can regard generators as concatenation of such diagrams : 

\hspace{4.5cm}
\includegraphics[width=6cm,angle=0,keepaspectratio,clip]{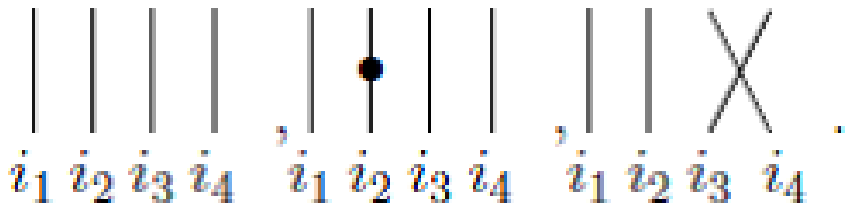}

An another weight $\Lambda$ on vertices of $\Gamma$ defines a cyclotomic ideal. 
We call a quotient of the KLR algebra by the cyclotomic ideal a cyclotomic KLR algebra. 
And the size of $\Gamma$ is called a level of a cyclotomic KLR algebra.

For each integer $n>1$, we set a quiver $A_{n}^{(1)}$ 
as its vertices are $\left\{ 0,1,2,\cdots ,n-1 \right\}$, 
and its arrows are from $i$ to $i+1$ (also $n-1$ to $0$).  
And fix $\alpha = \displaystyle \sum_{i {\rm : vertex}} \alpha_{i}$. 
From the symmetry of $A_{n}^{(1)}$, it suffices to consider the case $\Lambda = \Lambda_{0}$ 
for the level 1 cyclotomic KLR algebra. 

Our aim is to describe some properties of these algebras; 
the number of primitive idempotents, systematic changes of structures for $n$, the dimension. 

\section{Preliminaries}

Through this section, $K$ is a field and $I_{n}$ is a set consisting all of permutations of $(0,1,\cdots ,n-1)$. 

\begin{defn}
A KLR algebra $R_{n}(\alpha )$ is an algebra obtained by following generators and relations. 
\footnote{In general, there are more relations. }
\begin{itemize}
\item generators$:\left\{ {\bf e}({\bf i}) \verb@|@ {\bf i}\in I_{n} \right\} \cup 
\left\{ y_{1},\cdots,y_{n} \right\} \cup \left\{ \psi_{1},\cdots,\psi_{n-1}\right\} $
\item relations$:$\\
${\bf e}({\bf i}){\bf e}({\bf j})=\delta_{{\bf i},{\bf j}}{\bf e}({\bf i})$, \\
$\displaystyle \sum_{{\bf i}\in I_{n}} {\bf e}({\bf i})=1$, \\
$y_{k}{\bf e}({\bf i})={\bf e}({\bf i})y_{k}$, \\
$\psi_{k}{\bf e}({\bf i})={\bf e}({\bf s_{k}\cdot i})\psi_{k}$, \\
$y_{k}y_{l}=y_{l}y_{k}$, \\
$\psi_{k}y_{l}=y_{l}\psi_{k} \ (l\not =k,k+1)$, \\
$\psi_{k}\psi_{l}=\psi_{l}\psi_{k} \ (\vert k-l \vert >1)$, \\
$\psi_{k}y_{k+1}{\bf e}({\bf i})= y_{k}\psi_{k}{\bf e}({\bf i})$, \\
$y_{k+1}\psi_{k}{\bf e}({\bf i})= \psi_{k}y_{k}{\bf e}({\bf i})$, \\
$\psi_{k}^{2}{\bf e}({\bf i})= \left\{
\begin{array}{cccc}
{\bf e}({\bf i}) & (i_{k}\not \leftrightarrow i_{k+1}) \\
(y_{k+1}-y_{k}){\bf e}({\bf i}) & (i_{k}\rightarrow i_{k+1}) \\
(y_{k}-y_{k+1}){\bf e}({\bf i}) & (i_{k}\leftarrow i_{k+1}) \\
(y_{k+1}-y_{k})(y_{k}-y_{k+1}){\bf e}({\bf i}) & (i_{k}\leftrightarrow i_{k+1}) \\
\end{array}
\right.$, \\
$\psi_{k}\psi_{k+1}\psi_{k}{\bf e}({\bf i})= \psi_{k+1}\psi_{k}\psi_{k+1}{\bf e}({\bf i}) . \\$
\end{itemize}
\end{defn}

The three generators are respectively coresponding to the three diagrams in section 1. 
A multiplication of two generators are obtained as a concatenation of two diagrams 
(but if the colors of connecting part are different, it becomes 0). 
Since using diagrams is very useful to show some properties, 
we rewrite some relations to diagrams :

\noindent
\includegraphics[width=12cm,angle=0,keepaspectratio,clip]{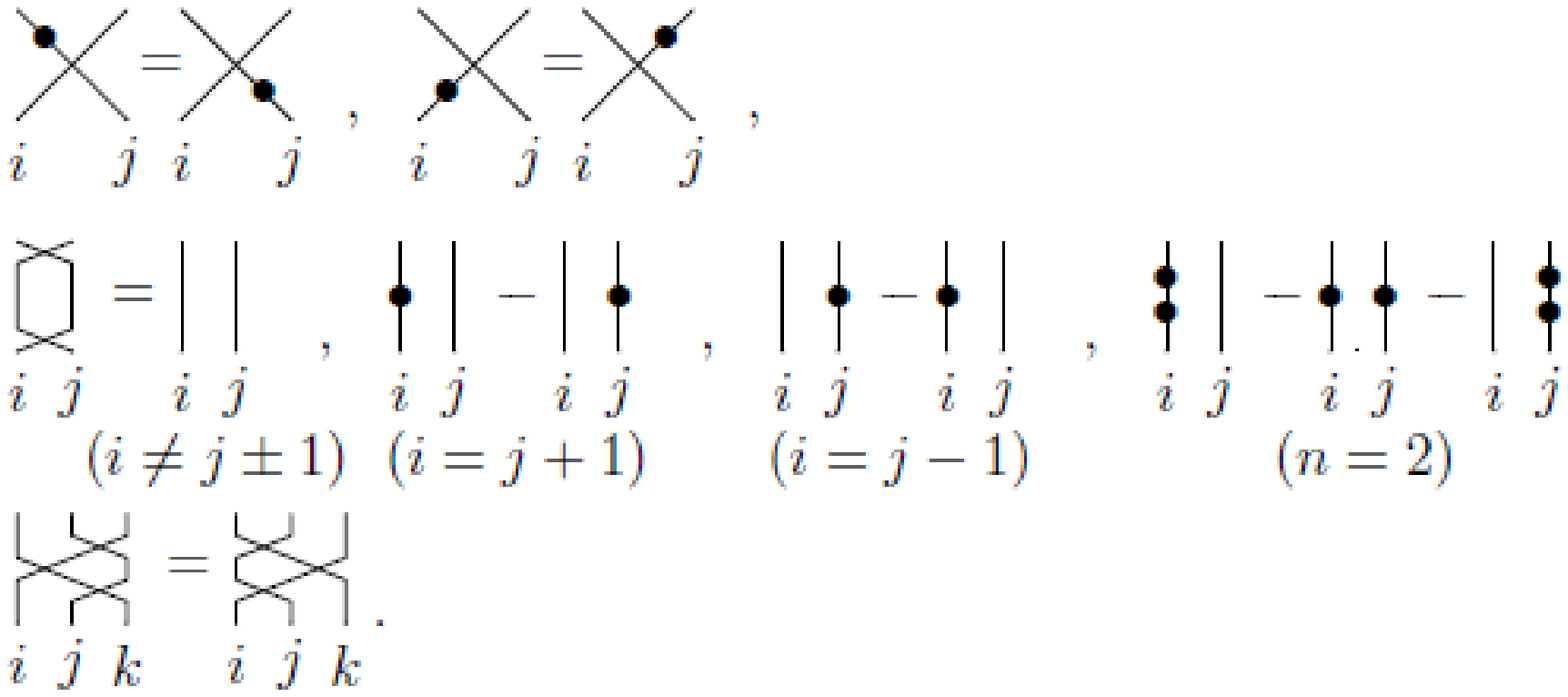}

A cyclotomic ideal $I^{\Lambda}$ and a cyclotomic KLR algebra $R_{n}$ are defined as follows.

\begin{defn}
A cyclotomic ideal $I^{\Lambda}$ is generated by 
\begin{center}
$\left\{ y_{1}{\bf e}({\bf i}) \verb@|@ {\bf i} \in I_{n} ,i_{1}=0 \right\} \cup \left\{ {\bf e}({\bf i}) \verb@|@ {\bf i} \in I_{n} ,i_{1}\neq 0 \right\}$.
\end{center}
Denote $R_{n}$ for corresponding cyclotomic KLR algebra $R_{n}(\alpha) / I^{\Lambda}$.
\end{defn}

\section{Properties}

In this section, we describe some properties of $R_{n}$. 
We need some representation theoretical facts written in the next section for proof. 

\begin{thm}\label{prop:pidem}
The number of ${\bf i} \in I_{n}$ satisfying ${\bf e}({\bf i}) \neq 0$ is exactly $2^{n-2}$. 
Moreover, the set consisting all of such ${\bf e}({\bf i})$s is complete set of primitive orthogonal idempotents. 
\end{thm}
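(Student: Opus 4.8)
The plan is to split the statement into a purely combinatorial count of the nonzero idempotents and a representation-theoretic count of the simple modules, and then to glue the two together by a Krull--Schmidt maximality argument. First I would decide which $\mathbf{e}(\mathbf{i})$ are nonzero. The generators of $I^{\Lambda}$ at once force $\mathbf{e}(\mathbf{i})=0$ whenever $i_1\neq 0$, leaving only the $(n-1)!$ sequences with $i_1=0$ as candidates. Using the representation-theoretic facts of the next section---the identification of $R_n$ with the block of the Hecke algebra of $S_n$ at a primitive $n$-th root of unity, together with its graded cellular basis---one gets that $\mathbf{e}(\mathbf{i})\neq 0$ holds precisely when $\mathbf{i}$ is the residue sequence of some standard tableau of a partition $\lambda\vdash n$ whose box residues $(c-r)\bmod n$ are pairwise distinct. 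Since any non-hook partition contains the box $(2,2)$ and hence two boxes of residue $0$, these $\lambda$ are exactly the $n$ hooks $(a,1^{\,n-a})$.

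Next I would count the distinct residue sequences arising from standard tableaux of hooks. Reading a tableau in the order $1,2,\dots,n$, the entry $1$ lies in $(1,1)$ (residue $0$), and at every stage the set $\{i_1,\dots,i_k\}$ is a cyclic interval of $\mathbb{Z}/n\mathbb{Z}$ containing $0$, with $i_{k+1}$ equal either to one more than its maximum or one less than its minimum; conversely every such interval-growing sequence is realized by some hook tableau. There are two choices at each of the first $n-2$ steps and the last step is forced, so the number of nonzero $\mathbf{e}(\mathbf{i})$ is exactly $2^{n-2}$.

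Orthogonality and completeness are immediate from the relations $\mathbf{e}(\mathbf{i})\mathbf{e}(\mathbf{j})=\delta_{\mathbf{i},\mathbf{j}}\mathbf{e}(\mathbf{i})$ and $\sum_{\mathbf{i}}\mathbf{e}(\mathbf{i})=1$, so the real content is primitivity. For this I would use that any decomposition of $1$ into orthogonal idempotents has at most as many terms as the number of indecomposable summands (with multiplicity) of the regular module $R_n$, namely $\sum_{S}\dim S$ taken over the simple modules, with equality forcing each term to be primitive. This block has $n$-core $\emptyset$ and $n$-weight $1$, hence is a defect-$1$ block; ordering the hooks by decreasing arm as $\lambda_1,\dots,\lambda_n$, its decomposition matrix gives $\dim D^{\lambda_i}+\dim D^{\lambda_{i-1}}=\dim S^{\lambda_i}=\binom{n-1}{i-1}$ with $\dim D^{\lambda_{n-1}}=\dim S^{(1^n)}=1$, and summing telescopically yields $\sum_S\dim S=2^{n-2}$. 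Since the $2^{n-2}$ idempotents $\mathbf{e}(\mathbf{i})$ already realize this maximum, each one must be primitive. The main obstacle is exactly this nonvanishing-and-primitivity half: both rest on the block-theoretic input of the next section, and the delicate point is to control the decomposition numbers precisely enough to obtain the exact value $\sum_S\dim S=2^{n-2}$ (equivalently, to pin down the defect-$1$ block structure), rather than merely an inequality.
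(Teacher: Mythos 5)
Your proposal is correct, and its second half is essentially the paper's argument: the paper also reduces primitivity to the count $\sum_{\lambda}\dim D^{\lambda}=2^{n-2}$ (its Lemma on $H_q(\mathcal{S}_r)\cong\bigoplus_\lambda(\dim D^\lambda)P^\lambda$, Uno's relation $[D^{(n-k,1^k)}]=-[D^{(n-k+1,1^{k-1})}]+[S^{(n-k,1^k)}]$, and the hook length formula give exactly your telescoping computation for the defect-$1$ block), followed by the same Krull--Schmidt maximality step. Where you genuinely diverge is the first half. The paper establishes the upper bound of $2^{n-2}$ candidates entirely inside the algebra, by diagrammatic computation with the KLR relations: $i_1=0$ is forced by the cyclotomic ideal, and a bad choice of $i_k$ kills $\mathbf{e}(\mathbf{i})$ via $\mathbf{e}(\mathbf{i})=\psi_{k-1}^2\mathbf{e}(\mathbf{i})=\psi_{k-1}\mathbf{e}(s_{k-1}\cdot\mathbf{i})\psi_{k-1}$ and its longer braid-relation analogues. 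You instead import the characterization of nonzero $\mathbf{e}(\mathbf{i})$ as residue sequences of standard tableaux (via the cellular/seminormal structure of the Hecke block), which is a true but heavier external input that does not appear among the facts the paper actually invokes in its representation-theoretic section. Each route buys something: the paper's computation is elementary, self-contained, and is reused later (the proof of its Proposition on $y_k\mathbf{e}(\mathbf{i})=0$ leans on the same interval structure), whereas your criterion delivers the upper bound and the nonvanishing simultaneously. That last point is worth emphasizing: the Krull--Schmidt count by itself only yields ``at most $2^{n-2}$ nonzero idempotents, all primitive if equality holds''; it cannot rule out that some of the $2^{n-2}$ constructed $\mathbf{e}(\mathbf{i})$ vanish (a local algebra decomposes $1$ into a single idempotent, after all). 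The paper leaves this lower-bound step implicit, while your tableau argument supplies it explicitly, so on this point your write-up is actually tighter than the original.
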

\begin{proof}
Fix $n$. We show there are at most $2^{n-2}$ ${\bf i}$s satisfying ${\bf e}({\bf i}) \neq 0$ 
by constructing ${\bf i}$ from $i_{1}$ to $i_{b}$ avoiding ${\bf e}({\bf i})=0$. 
The rest part is proved in next section. 

In the case of $n=2$, there is only $(0,1)$. 

In the case of $n>2$, at first $i_{1}$ must be $0$ from the definition of the cyclotomic ideal. 
Next, $i_{2}$ must be $1$ or $n-1$ which are neighborhood of $0$ in the quiver.
If not, we obtain
\begin{eqnarray*}
{\bf e}((0,i_{2},\cdots )) & = & \psi_{1}^{2} {\bf e}((0,i_{2},\cdots )) \\
& = & \psi_{1} {\bf e}((i_{2},0,\cdots )) \psi_{1} \\
& = & 0 .
\end{eqnarray*}
We can write this equation by using diagrams as follows :

\hspace{5cm}
\includegraphics[width=5cm,angle=0,keepaspectratio,clip]{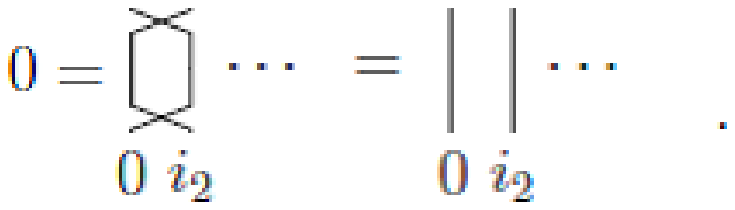}

We must keep taking one of the two neighborhoods for $i_{k}(2<k<n-1)$.
If not, ${\bf e}({\bf i})=0$ from following equation :

\hspace{5mm}
\includegraphics[width=14cm,angle=0,keepaspectratio,clip]{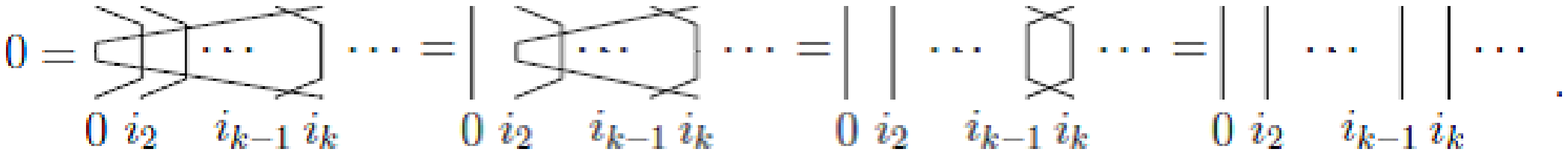}

At last, we can set the rest number for $i_{n}$. 
Then we can obtain $2^{n-2}$ ${\bf i}$s constructed by using above method.
\end{proof}

\begin{prop}\label{prop:nilp}
Let ${\bf e}({\bf i})\not =0$ in $R_{n}$. Then these properties hold :
\begin{itemize}
\item[$(a)$]$y_{k}{\bf e}({\bf i})=0 \ (1\leq k < n) $, 
\item[$(b)$]$y_{n}^{2}{\bf e}({\bf i})=0$, 
\item[$(c)$]$y_{n}{\bf e}({\bf i})\not=0$. 
\end{itemize}
\end{prop}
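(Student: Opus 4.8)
The plan is to treat (a) and (b) as direct consequences of the KLR relations together with the combinatorial description of the nonzero idempotents obtained in Theorem~\ref{prop:pidem}, while (c), the nonvanishing, is the genuinely hard part and will have to wait for the representation-theoretic input of the next section. Throughout I use the fact extracted from the proof of Theorem~\ref{prop:pidem} that ${\bf e}({\bf i})\neq 0$ exactly when $i_1=0$ and every prefix $\{i_1,\dots,i_k\}$ is a contiguous arc of the cycle containing $0$; equivalently each $i_k$ is a neighbour of one of the two endpoints of the arc $\{i_1,\dots,i_{k-1}\}$. I also use that, since each ${\bf i}\in I_n$ is a permutation, all entries are distinct, so the correction terms never appear and the relations $\psi_k y_{k+1}{\bf e}({\bf i})=y_k\psi_k{\bf e}({\bf i})$ and $y_{k+1}\psi_k{\bf e}({\bf i})=\psi_k y_k{\bf e}({\bf i})$ may be used as clean identities.

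For (a) I would induct on $k$, with $1\le k\le n-1$. The base case $k=1$ is immediate: $i_1=0$, so $y_1{\bf e}({\bf i})$ is a generator of the cyclotomic ideal and vanishes in $R_n$. For the inductive step the first observation is that $\psi_{k-1}y_k{\bf e}({\bf i})=y_{k-1}\psi_{k-1}{\bf e}({\bf i})=y_{k-1}{\bf e}(s_{k-1}\cdot{\bf i})\psi_{k-1}$, and the factor $y_{k-1}{\bf e}(s_{k-1}\cdot{\bf i})$ is $0$ either because $s_{k-1}\cdot{\bf i}$ is not allowed (so its idempotent is already $0$) or by the induction hypothesis; hence $\psi_{k-1}y_k{\bf e}({\bf i})=0$. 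I then split on whether $i_{k-1}$ and $i_k$ are joined by an arrow. If $i_{k-1}\not\leftrightarrow i_k$ then $\psi_{k-1}^2{\bf e}({\bf i})={\bf e}({\bf i})$, and a short computation with the two displayed relations rewrites $\psi_{k-1}^2 y_k{\bf e}({\bf i})$ as $y_k{\bf e}({\bf i})$; comparing with $\psi_{k-1}^2 y_k{\bf e}({\bf i})=\psi_{k-1}(\psi_{k-1}y_k{\bf e}({\bf i}))=0$ gives $y_k{\bf e}({\bf i})=0$. If instead $i_{k-1}\leftrightarrow i_k$, then because $k\le n-1$ the arc $\{i_1,\dots,i_{k-1}\}$ cannot yet wrap around, so $i_k$ is not adjacent to $\{i_1,\dots,i_{k-2}\}$ and $s_{k-1}\cdot{\bf i}$ is not allowed; thus $\psi_{k-1}{\bf e}({\bf i})=0$, so $\psi_{k-1}^2{\bf e}({\bf i})=0$, and the $\psi^2$ relation gives $\pm(y_k-y_{k-1}){\bf e}({\bf i})=0$, whence $y_k{\bf e}({\bf i})=y_{k-1}{\bf e}({\bf i})=0$ by induction.

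For (b) the key point is that at the last position the arc $\{i_1,\dots,i_{n-2}\}$ misses exactly two vertices, which are adjacent, so $i_{n-1}\leftrightarrow i_n$ and (for $n\ge 3$) $s_{n-1}\cdot{\bf i}$ is again allowed. Using $y_{n-1}{\bf e}({\bf i})=0$ from (a), the $\psi^2$ relation yields $y_n{\bf e}({\bf i})=\pm\psi_{n-1}^2{\bf e}({\bf i})$. Then $y_n^2{\bf e}({\bf i})=\pm y_n\psi_{n-1}^2{\bf e}({\bf i})$, and pushing one $y_n$ through $\psi_{n-1}$ via $y_n\psi_{n-1}{\bf e}(s_{n-1}\cdot{\bf i})=\psi_{n-1}y_{n-1}{\bf e}(s_{n-1}\cdot{\bf i})$ produces the factor $y_{n-1}{\bf e}(s_{n-1}\cdot{\bf i})$, which is $0$ by part (a) applied to the allowed sequence $s_{n-1}\cdot{\bf i}$; hence $y_n^2{\bf e}({\bf i})=0$. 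The case $n=2$ is even simpler: $0$ and $1$ are joined by arrows in both directions, $\psi_1{\bf e}((0,1))=0$, and the fourth $\psi^2$ relation directly gives $(y_2-y_1)^2{\bf e}((0,1))=0$, i.e. $y_2^2{\bf e}((0,1))=0$.

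The genuine obstacle is (c): everything above is designed to kill $y$'s, and nothing in the relations alone forces $y_n{\bf e}({\bf i})$ to survive. To prove $y_n{\bf e}({\bf i})\neq 0$ I expect to need the representation-theoretic facts promised for the next section---either a faithful (e.g.\ polynomial-type) module on which $y_n{\bf e}({\bf i})$ acts nontrivially, or the graded dimension / basis theorem showing $\dim {\bf e}({\bf i})R_n{\bf e}({\bf i})\ge 2$, so that the nilpotent $y_n{\bf e}({\bf i})$ is the nonzero top of a local algebra $K[y_n]/(y_n^2)$. Establishing this nonvanishing, rather than the formal manipulations of (a) and (b), is where the real content lies.
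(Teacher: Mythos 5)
Your proposal is correct, and for parts $(a)$ and $(b)$ it takes a genuinely different route from the paper's. The paper also inducts on $k$, but for the fixed sequence ${\bf i}$ it invokes the unique $l<k$ with $i_l\leftrightarrow i_k$ and then performs a diagrammatic computation pulling the dotted $k$-th strand across the intermediate (non-adjacent) strands until it meets strand $l$, so that the quadratic relation turns $y_k{\bf e}({\bf i})$ into terms involving $y_l{\bf e}({\bf i})$, which die by the inductive hypothesis for the \emph{same} ${\bf i}$; part $(b)$ is handled the same way, using the two earlier neighbours $i_l,i_m$ of $i_n$. You avoid all strand-pulling by strengthening the induction to range over all sequences at once and working locally at the single crossing $\psi_{k-1}$: in the non-adjacent case you conjugate by $\psi_{k-1}$ and apply the inductive hypothesis to the swapped sequence $s_{k-1}\cdot{\bf i}$ (which is either again of the allowed form or has vanishing idempotent), and in the adjacent case you observe that $s_{k-1}\cdot{\bf i}$ is forbidden, so $\psi_{k-1}{\bf e}({\bf i})=0$ and the quadratic relation finishes; your $(b)$ likewise rests on the neat observation that the last two entries are always quiver-adjacent and that $s_{n-1}\cdot{\bf i}$ is again allowed, in place of the paper's two-neighbour diagram. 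What your version buys is that every step is a one-line local relation, at the cost of a stronger induction hypothesis (quantified over all ${\bf i}$, using that the allowed set is stable under these swaps); the paper's version needs the hypothesis only for the fixed ${\bf i}$ but pays with longer diagrammatic computations. Your treatment of $(c)$ --- deferring it to the representation theory and expecting ${\rm dim}\,{\rm End}({\bf e}({\bf i})R_n)\geq 2$ because no projective module is simple --- is exactly the argument the paper gives later (its Proposition 13), so the deferral matches the paper's own structure.
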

\begin{proof}
$(c)$ will be proved in next section. 

In the case of $n=2$, $(a)$ is by definition, $(b)$ follows by expanding $\psi {\bf e}(0,1)\psi$. 

In the case of $n>2$, we prove $(a)$ by induction for k. 

For $k=1$, $y_{k}{\bf e}({\bf i})=0$ from definition.

We show $y_{k}{\bf e}({\bf i})=0$ for $k<n$. 
By proof of Thm.3, there is unique $1\leq l<k$ such that $i_{k}$ and $i_{l}$ are neighborhoods. 
Using $y_{l}{\bf e}({\bf i})=0$ by assumption of induction, 
we obtain $y_{k}{\bf e}({\bf i})=0$ from following equation :

\includegraphics[width=14cm,angle=0,keepaspectratio,clip]{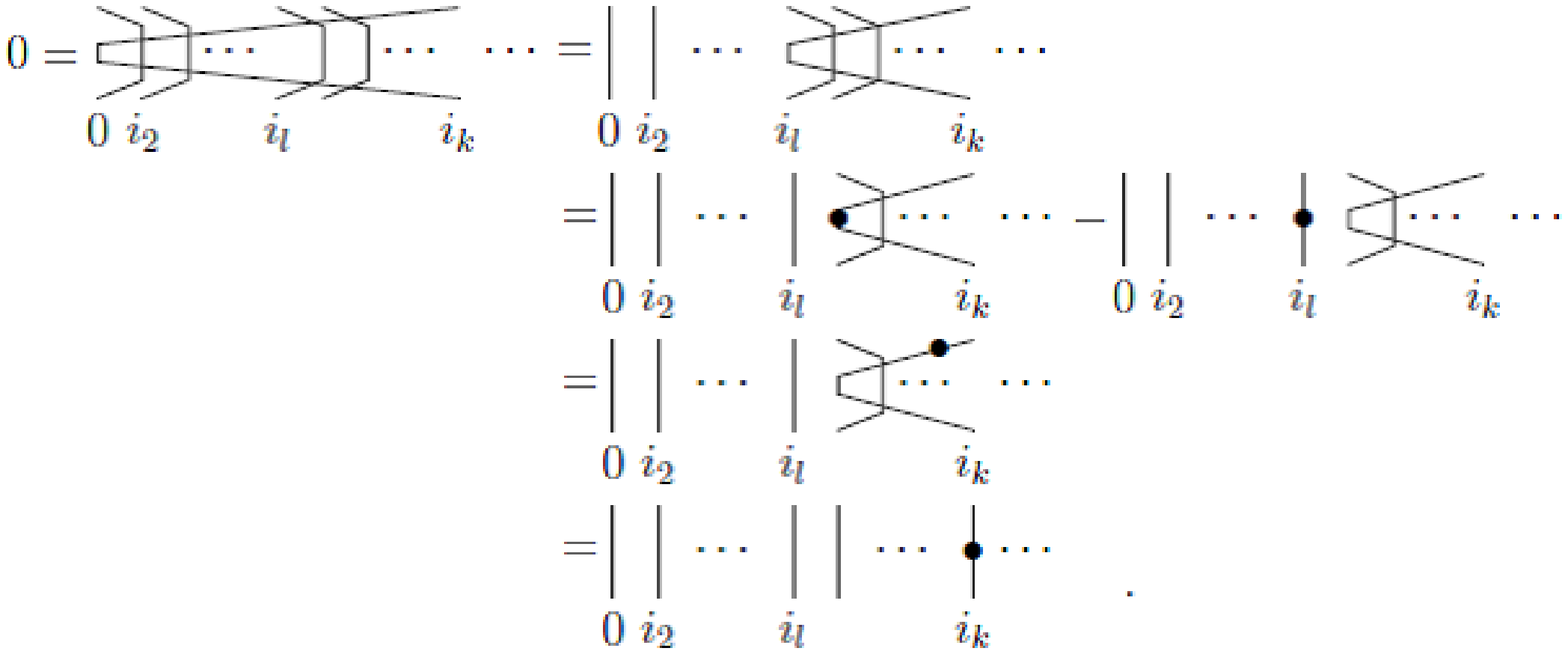}

We assume $i_{l}\rightarrow i_{k}$ in this equation, 
but if $i_{l}\leftarrow i_{k}$ the difference is only signs. Therefore $(a)$ follows. 

Using the same method, since $y_{k}{\bf e}({\bf i})=0$ for $k<n$ and 
there are two neighborhoods $i_{l}$,$i_{m}$ $(1\leq l<m<n)$ of $i_{n}$, 
we obtain $y_{n}^{2}{\bf e}({\bf i})=0$ as follows :

\includegraphics[width=14cm,angle=0,keepaspectratio,clip]{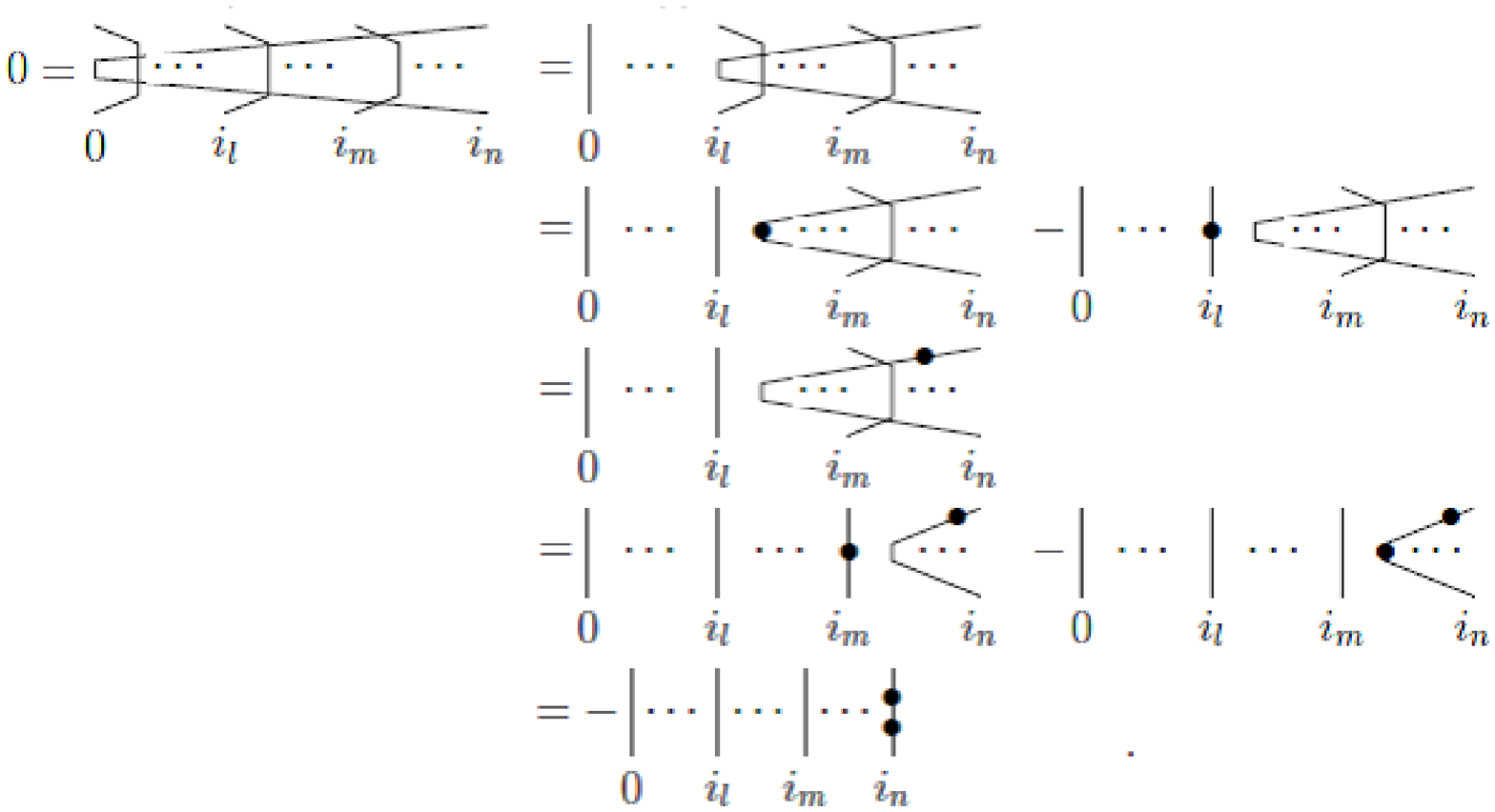}

Also we assume there $i_{l} \rightarrow i_{n}\rightarrow i_{m}$, 
but the difference with the case $i_{l} \leftarrow i_{n}\leftarrow i_{m}$ is only signs.
Therefore $(b)$ follows. 
\end{proof}

For $R_{n}$, set two subsets $I_{n}^{e}$, $I_{n}^{1}$ of $I_{n}$ as follows : 
\begin{eqnarray*}
I_{n}^{e}&=& \left\{ {\bf i}\in I_{n} \verb@|@ {\bf e}({\bf i})\not =0 \right\}\\
I_{n}^{1}&=& \left\{ {\bf i}\in I_{n}^{e} \verb@|@ i_{2}=1 \right\}
\end{eqnarray*}

And set an idempotent ${\bf e}$ of $R_{n}$ as follows : 
\begin{equation*}
{\bf e}=\displaystyle \sum_{ {\bf i} \in I_{n}^1 } {\bf e}({\bf i})
\end{equation*}

At last, set two maps \ $\hat{}:I_{n-1}^{e}(\alpha)\rightarrow I_{n}^{1}(\alpha)$, 
$\bar{}:I_{n}^{1}(\alpha)\rightarrow I_{n-1}^{e}(\alpha)$ as follows : 
\begin{eqnarray*}
\hat{{\bf i}}=(0,1,i_{2}+1,\cdots ,i_{n-1}+1)& for &{\bf i}=(0,i_{2},\cdots ,i_{n-1}),\\
\bar{{\bf i}}=(0,i_{3}-1,\cdots ,i_{n}-1)& for &{\bf i}=(0,1,i_{3},\cdots ,i_{n}).
\end{eqnarray*}
In other word, $\hat{}$ increments $i_{k}$ except $i_{1}$ and inserts $1$ at second, 
$\bar{}$ decrements $i_{k}$ except $i_{1}$ and remove $i_{2}$.
Both maps are bijection and inversion of the other. 

\begin{prop}
For each $n>2$, an isomorphism of algebras
\begin{equation*}
R_{n-1} \rightarrow {\bf e}R_{n}{\bf e}
\end{equation*}
is obtained as follows : 
\begin{equation*}
{\bf e}({\bf i})\mapsto {\bf e}(\hat{{\bf i}}) \ , \ 
y_{n-1} \mapsto y_{n} \ , \ 
\psi_{k} \mapsto \psi_{k+1} \ .
\end{equation*}
\end{prop}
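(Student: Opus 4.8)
The plan is to define $\phi$ on the generators of $R_{n-1}$ exactly as stated, reading the targets inside the corner algebra $\mathbf{e}R_{n}\mathbf{e}$: namely $\mathbf{e}(\mathbf{i})\mapsto\mathbf{e}(\hat{\mathbf{i}})$, $y_{k}\mapsto\mathbf{e}y_{k+1}\mathbf{e}$ (so that $y_{n-1}\mapsto y_{n}\mathbf{e}$ is the only nontrivial dot) and $\psi_{k}\mapsto\mathbf{e}\psi_{k+1}\mathbf{e}$, then to verify that these images satisfy the defining relations of $R_{n-1}$, and finally that $\phi$ is bijective. Two simplifications drive the whole argument. By Proposition \ref{prop:nilp}(a) one has $y_{k}=0$ in $R_{n-1}$ for $k<n-1$, and dually $\mathbf{e}y_{k+1}\mathbf{e}=y_{k+1}\mathbf{e}=0$ for $k+1<n$; moreover $\psi_{1}\mathbf{e}(\mathbf{i})=\mathbf{e}(s_{1}\mathbf{i})\psi_{1}=0$ since $(s_{1}\mathbf{i})_{1}=i_{2}\neq0$, so $\psi_{1}=0$ in $R_{n-1}$, while $\mathbf{e}\psi_{2}\mathbf{e}=0$ because $\mathbf{i}$ and $s_{2}\mathbf{i}$ cannot both have second letter $1$. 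Thus $R_{n-1}$ is effectively generated by $\{\mathbf{e}(\mathbf{i})\}$, $y_{n-1}$ and $\psi_{2},\dots,\psi_{n-2}$, whose images are $\{\mathbf{e}(\hat{\mathbf{i}})\}$, $y_{n}\mathbf{e}$ and $\mathbf{e}\psi_{3}\mathbf{e},\dots,\mathbf{e}\psi_{n-1}\mathbf{e}$, and the degenerate generators map to $0$ on both sides.

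Next I would check the relations. The idempotent relations and $\sum_{\mathbf{i}}\mathbf{e}(\hat{\mathbf{i}})=\mathbf{e}$ are immediate from the fact, recorded before the statement, that $\hat{}$ is a bijection $I_{n-1}^{e}\to I_{n}^{1}$; in particular $\mathbf{e}$ is the image of the unit. The commutation, twist and braid relations among the $y$'s and $\psi$'s are transported verbatim by the shift $k\mapsto k+1$, using that $\hat{}$ intertwines $s_{k}$ with $s_{k+1}$ for $k\geq2$ (for $m\geq3$ the $m$-th letter of $\hat{\mathbf{i}}$ is $i_{m-1}+1$, so $\widehat{s_{k}\mathbf{i}}=s_{k+1}\hat{\mathbf{i}}$). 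The one relation needing real attention is $\psi_{k}^{2}\mathbf{e}(\mathbf{i})$, whose right-hand side depends on the arrows joining $i_{k},i_{k+1}$ in $A_{n-1}^{(1)}$, whereas the image relation $\psi_{k+1}^{2}\mathbf{e}(\hat{\mathbf{i}})$ depends on the arrows joining $i_{k}+1,i_{k+1}+1$ in $A_{n}^{(1)}$. The key point is that for $\mathbf{i}\in I_{n-1}^{e}$ and $k\geq2$ the letters $i_{k},i_{k+1}$ lie in $\{1,\dots,n-2\}$, avoiding the vertex $0$ in position $1$; the only edge of $A_{n-1}^{(1)}$ outside the subquiver on $\{1,\dots,n-2\}$ is the wrap-around edge $n-2\to0$, which involves $0$ and so never joins two such letters. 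Hence $v\mapsto v+1$ restricts to a quiver isomorphism from the linear subquiver $1\to\cdots\to n-2$ onto $2\to\cdots\to n-1$ in $A_{n}^{(1)}$, the adjacency type of $i_{k},i_{k+1}$ (whether $\not\leftrightarrow$, $\rightarrow$, $\leftarrow$ or $\leftrightarrow$) is preserved, and after shifting the $y$-indices the two sides agree. The case $k=1$ is vacuous, as $\psi_{1}$ and $\mathbf{e}\psi_{2}\mathbf{e}$ both vanish.

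To make $\phi$ multiplicative into the corner I would observe that each nontrivial generator image $g'\in\{\mathbf{e}(\hat{\mathbf{i}}),\,y_{n},\,\psi_{m}\ (m\geq3)\}$ satisfies $\mathbf{e}g'(1-\mathbf{e})=0$, hence $\mathbf{e}g'\mathbf{e}=\mathbf{e}g'$; for the idempotents and for $y_{n}$ this is clear, and for $\psi_{m}$ with $m\geq3$ it reduces to $\mathbf{e}\psi_{m}\mathbf{e}(\mathbf{j})=0$ for $\mathbf{j}\notin I_{n}^{1}$, which holds because $s_{m}$ fixes the second coordinate. With the inner $\mathbf{e}$ transparent, a product $\phi(g_{1})\phi(g_{2})=\mathbf{e}g_{1}'\mathbf{e}g_{2}'\mathbf{e}$ collapses to $\mathbf{e}g_{1}'g_{2}'\mathbf{e}$, so the relation-check above really does yield a well-defined algebra homomorphism $R_{n-1}\to\mathbf{e}R_{n}\mathbf{e}$.

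Finally, for bijectivity the inverse bijection $\bar{}:I_{n}^{1}\to I_{n-1}^{e}$ together with the downward shift $y_{n}\mapsto y_{n-1}$, $\psi_{m}\mapsto\psi_{m-1}$ identifies the natural candidate inverse, and since $\hat{}$ and $\bar{}$ are mutually inverse this correspondence undoes $\phi$ on every generator. The \emph{main obstacle}, as I see it, is promoting this to an actual two-sided inverse: the relation-check produces a homomorphism but says nothing about the sizes of the two algebras, so I must know that $\mathbf{e}R_{n}\mathbf{e}$ is generated by the compressed generators and that $\phi$ collapses no basis. I would settle this with the basis and dimension facts of the next section: by Proposition \ref{prop:nilp} every element of $R_{n-1}$ and of $\mathbf{e}R_{n}\mathbf{e}$ is a combination of terms $\psi_{w}y^{\varepsilon}\mathbf{e}(\mathbf{i})$ carrying at most one dot on the last strand ($\varepsilon\in\{0,1\}$), and matching these spanning sets across $\phi$ forces equality of dimensions and hence the isomorphism.
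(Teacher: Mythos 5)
Your construction and verification of the forward map is correct and in fact more careful than the paper's own treatment: the vanishing of the degenerate generators ($y_{k}$ for $k<n-1$, $\psi_{1}$, and $\mathbf{e}\psi_{2}\mathbf{e}$), the intertwining $\widehat{s_{k}\mathbf{i}}=s_{k+1}\hat{\mathbf{i}}$, the preservation of adjacency type under $v\mapsto v+1$, and the collapse $\mathbf{e}g'\mathbf{e}=\mathbf{e}g'$ making the map land multiplicatively in the corner are exactly the points the paper compresses into ``it is easy to check relations are preserved.'' One small omission: the $k=1$ quadratic relation is not vacuous merely because $\psi_{1}$ and $\mathbf{e}\psi_{2}\mathbf{e}$ vanish; you must also check that the image of its right-hand side vanishes, namely $\pm(y_{3}-y_{2})\mathbf{e}$ for $n>3$ (zero by Proposition \ref{prop:nilp}(a)) and $-y_{3}^{2}\mathbf{e}$ for $n=3$ (zero by Proposition \ref{prop:nilp}(b)).

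The genuine gap is in your final paragraph. Matching spanning sets across $\phi$ does give \emph{surjectivity}: since $\mathbf{e}(\mathbf{j})\psi_{w}\mathbf{e}(\mathbf{i})\neq 0$ forces $\mathbf{j}=w\mathbf{i}$, and two sequences in $I_{n}^{1}$ agree in their first two entries, any $w$ surviving compression fixes positions $1$ and $2$, so $\psi_{w}$ is a product of $\psi_{m}$ with $m\geq 3$ and the compressed generators do generate $\mathbf{e}R_{n}\mathbf{e}$. But a bijection between spanning sets cannot ``force equality of dimensions'': spanning sets need not be bases, and a surjection carrying a spanning set onto a spanning set can perfectly well have a kernel. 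Injectivity is precisely what is at stake, and nothing you invoke supplies it --- Proposition \ref{prop:nilp} yields only the spanning bound, and the representation-theoretic section contains no basis theorem or dimension formula for $\mathbf{e}R_{n}\mathbf{e}$ (the dimension of $R_{n}$ itself is computed only two sections later). The paper closes this hole by a different device, which avoids any dimension count: because every element of $\mathbf{e}R_{n}\mathbf{e}$ can be presented without $\psi_{1}$, the downward shift $\mathbf{e}(\mathbf{i})\mapsto\mathbf{e}(\bar{\mathbf{i}})$, $y_{n}\mapsto y_{n-1}$, $\psi_{k}\mapsto\psi_{k-1}$ defines a map $\mathbf{e}R_{n}\mathbf{e}\rightarrow R_{n-1}$ that undoes $\phi$ on every generator, i.e.\ an explicit two-sided inverse. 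To repair your argument you would either have to justify that this inverse assignment is well defined (essentially the paper's route, itself stated tersely there) or prove that your spanning set of $R_{n-1}$ maps to a \emph{linearly independent} set in $\mathbf{e}R_{n}\mathbf{e}$, which requires input beyond what you cite.
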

\begin{proof}
For ${\bf e}({\bf i})$, ${\bf e}({\bf i})=0$ and ${\bf e}(\hat{{\bf i}})=0$ are equivalent. 
For $y_{k}$, what we check is only $y_{n-1} \in R_{n-1}$ and $y_{n} \in R_{n}$ by Prop.4, 
and $y_{n}$ and $\psi_{n-1}$ cannot appear at the same time. 
Then it is easy to check relations are preserved. 
Since elements in ${\bf e}R_{n}{\bf e}$ can be presented without $\psi_{1}$, 
we can make the inversion map ${\bf e}R_{n}{\bf e}\rightarrow R_{n-1}$ as follows : 
\begin{equation*}
{\bf e}({\bf i})\mapsto {\bf e}(\bar{{\bf i}}) \ , \ 
y_{n} \mapsto y_{n-1} \ , \ 
\psi_{k} \mapsto \psi_{k-1} \ .
\end{equation*}
\end{proof}

\section{Representation Theoretical Facts}

Using isomorphism given in [3], each $R_{n}$ is replaced by well-known object in representation theory. 
Using the facts in it, we complete the proofs of previous section. 

$R_{n}^{\Lambda}(\alpha )$ is a cyclotomic KLR algebra for general $\Lambda $, 
and $|\alpha|$ is a size of weight $\alpha $, but it is not necessary, we skip definitions. 

\begin{thm}[Brundan-Kleshchev,Rouquier] \ 
\begin{itemize}
\item[$(a)$] $\displaystyle \bigoplus _{|\alpha| =n} R_{n}^{\Lambda}(\alpha )\cong \mathcal{H}_{q}^{\Lambda}(n)$\\
The right side is Ariki-Koike algebra determined by $\Lambda $ and $n,q=\sqrt[n]{1} \in \mathbb{C}$. 
\item[$(b)$] $R_{n}^{\Lambda}(\alpha)$ is a block. That is, an indecomposable two-sided ideal. 
\end{itemize}
\end{thm}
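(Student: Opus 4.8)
The statement is due to Brundan--Kleshchev and, independently, Rouquier, so the plan is to recall their construction rather than to reprove it from scratch; reference [3] carries the detailed argument. For part (a) the plan is to exhibit the explicit isomorphism. On the Ariki--Koike side one has the Jucys--Murphy elements $X_{1},\dots ,X_{n}$ together with $T_{1},\dots ,T_{n-1}$ satisfying the Hecke relations at $q=\sqrt[n]{1}$. Because $q$ is a root of unity each $X_{k}$ acts semisimply with eigenvalues among the powers of $q$, so one first builds the idempotents ${\bf e}({\bf i})$ as the simultaneous spectral projections recording the eigenvalue pattern carried by ${\bf i}\in I_{n}$. One then takes $y_{k}$ to be a nilpotent correction of $X_{k}$, of the form $\sum_{{\bf i}}(1-q^{-i_{k}}X_{k}){\bf e}({\bf i})$, and defines each $\psi_{k}$ from $T_{k}$ by an appropriate power series in the $y$'s, after which one checks that these images satisfy all the relations of Definition 1 together with the cyclotomic relations of Definition 2. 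Invertibility follows either by writing the inverse expressions for $X_{k}$ and $T_{k}$ or by a dimension count.

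For part (b) I would first observe that the idempotents $f_{\alpha}=\sum_{{\rm wt}({\bf i})=\alpha}{\bf e}({\bf i})$ are central and orthogonal, so the displayed sum is a genuine decomposition of $\mathcal{H}_{q}^{\Lambda}(n)$ into two-sided ideals. It then remains to see that each $f_{\alpha}\mathcal{H}_{q}^{\Lambda}(n)$ is indecomposable, i.e.\ a single block. This is exactly the block classification of Ariki--Koike algebras: two standard modules lie in the same block precisely when their residue data agree, and that datum is what $\alpha$ encodes. Invoking this linkage principle (equivalently the Lyle--Mathas classification of blocks) shows that no $f_{\alpha}$ can be split further.

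The genuine obstacle lies entirely in part (a): verifying that the power-series definition of $\psi_{k}$ reproduces the quadratic relation for $\psi_{k}^{2}{\bf e}({\bf i})$ in each of the four quiver cases, and the braid relation $\psi_{k}\psi_{k+1}\psi_{k}{\bf e}({\bf i})=\psi_{k+1}\psi_{k}\psi_{k+1}{\bf e}({\bf i})$. These are the calculations that pin down the precise choice of series, and they form the technical heart of the Brundan--Kleshchev proof; everything else is bookkeeping or a citation.
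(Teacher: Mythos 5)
This theorem is not proved in the paper at all: it is an attributed citation (Brundan--Kleshchev [3], and Rouquier [2]), used as a black box at the start of Section 4 so that facts about Hecke algebras at a root of unity can be imported into the KLR setting. So there is no in-paper argument to match your proposal against; the honest comparison is between your sketch and the proof in [3] that the paper points to. On that score your outline is broadly faithful to Brundan--Kleshchev: the idempotents $\mathbf{e}(\mathbf{i})$ are indeed simultaneous spectral projections attached to the Jucys--Murphy elements, the $y_{k}$ are defined by $\sum_{\mathbf{i}}(1-q^{-i_{k}}X_{k})\mathbf{e}(\mathbf{i})$, the $\psi_{k}$ are obtained from intertwiners built out of the $T_{k}$ corrected by power series in the $y$'s, the hard verifications are exactly the quadratic and braid relations, and part (b) does reduce, as you say, to the block classification of Ariki--Koike algebras (Lyle--Mathas), which shows that the central orthogonal idempotents $f_{\alpha}$ are primitive central. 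Your proposal and the paper thus take the same ultimate route --- defer to [3] --- but yours usefully records what is inside that reference.

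One claim in your sketch is wrong and should be fixed, because it contradicts the mechanism of the construction: you say each $X_{k}$ ``acts semisimply'' with eigenvalues among the powers of $q$. The correct statement is that the $X_{k}$ act with all \emph{generalized} eigenvalues among the powers of $q$, but in general not semisimply; the $\mathbf{e}(\mathbf{i})$ are projections onto simultaneous generalized eigenspaces. If the $X_{k}$ really were semisimple, your own formula would force $y_{k}\mathbf{e}(\mathbf{i})=0$ for all $k$ and $\mathbf{i}$, which is false --- indeed Proposition 4(c) of this paper asserts $y_{n}\mathbf{e}(\mathbf{i})\neq 0$ whenever $\mathbf{e}(\mathbf{i})\neq 0$. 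The nilpotency of $y_{k}$ is precisely the statement that $X_{k}$ differs from its semisimple part by a nilpotent operator on each generalized eigenspace; that failure of semisimplicity is what the KLR $y$-generators encode. With that correction, your outline is an accurate pr\'ecis of the cited proof.
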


We set $\Lambda =\Lambda _{0}$. 
In this case, Ariki-Koike algebra is Hecke algebra $H_{q}(\mathcal{S}_{r})$ of type $A$. 
The following theorem holds. For notations in the theorem, see Mathas([4] p.50 Ex.18). 

\begin{thm}[Dipper-James]
Let $\lambda$ be a partition of $r$. \\
There exists $H_{q}(\mathcal{S}_{r})$-module $S^{\lambda}$ with following properties : \\
Let $n$ be minimum integer satisfying $1+q+q^{2}+\cdots +q^{n-1}=0$. 
\begin{itemize}
\item[$(a)$] If $\lambda$ is $n$-$regular$ (the same number doesn't continue n times),
then {\rm top} of $S^{\lambda}$ is uniquely determined. 
In this case, we denote $D^{\lambda}$ for {\rm top}$S^{\lambda}$. 
\item[$(b)$] $\left\{ D^{\lambda} \mid \lambda : n {\mbox{-}}regular \right\}$ is 
complete list of simple $H_{q}(\mathcal{S}_{r})$-modules. 
\end{itemize}
\end{thm}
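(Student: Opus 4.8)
The statement is classical and I would prove it by exhibiting $H_q(\mathcal{S}_r)$ as a \emph{cellular algebra} in the sense of Graham--Lehrer and then feeding the Specht modules into the general machinery. First I would fix the Murphy basis: for a partition $\lambda$ of $r$ and a pair $(\mathfrak{s},\mathfrak{t})$ of standard $\lambda$-tableaux, set $m_{\mathfrak{s}\mathfrak{t}}=T_{d(\mathfrak{s})}^{*}\,x_{\lambda}\,T_{d(\mathfrak{t})}$, where $x_{\lambda}=\sum_{w\in\mathcal{S}_{\lambda}}q^{\ell(w)}T_{w}$ is the $q$-symmetrizer over the Young subgroup $\mathcal{S}_{\lambda}$ and $*$ is the anti-involution $T_{w}\mapsto T_{w^{-1}}$. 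The first step is to verify that $\{m_{\mathfrak{s}\mathfrak{t}}\}$ is a $K$-basis of $H_q(\mathcal{S}_r)$ and that, modulo the span of basis elements indexed by shapes strictly dominating $\lambda$, left multiplication acts only on the $\mathfrak{s}$-index independently of $\mathfrak{t}$; this is precisely the cellularity axiom. The Specht (cell) module $S^{\lambda}$ is then the span of the symbols $m_{\mathfrak{t}}$ (for $\mathfrak{t}$ standard of shape $\lambda$) with the induced $H$-action.

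Next I would introduce the canonical invariant bilinear form $\langle\,,\,\rangle_{\lambda}$ on $S^{\lambda}$, defined by the rule $m_{\mathfrak{s}\mathfrak{t}}\,m_{\mathfrak{u}\mathfrak{v}}\equiv\langle m_{\mathfrak{t}},m_{\mathfrak{u}}\rangle_{\lambda}\,m_{\mathfrak{s}\mathfrak{v}}$ modulo higher-shape terms; symmetry and $H$-invariance follow from the anti-involution fixing each $x_{\lambda}$. The radical $\mathrm{rad}\,\langle\,,\,\rangle_{\lambda}$ is then an $H$-submodule, and I set $D^{\lambda}=S^{\lambda}/\mathrm{rad}\,\langle\,,\,\rangle_{\lambda}$. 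At this point the Graham--Lehrer theory yields, purely formally: whenever $D^{\lambda}\neq 0$ it is absolutely irreducible and is the unique simple quotient (top) of $S^{\lambda}$; distinct nonzero $D^{\lambda}$ are pairwise non-isomorphic; and every simple $H_q(\mathcal{S}_r)$-module arises this way. This already delivers the \emph{structure} asserted in $(a)$ and the completeness in $(b)$, \emph{provided} I can identify for which $\lambda$ the form is nonzero.

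Thus the whole theorem reduces to the combinatorial criterion $D^{\lambda}\neq 0\iff\lambda\text{ is }n\text{-regular}$, and this is where the real work lies. The plan is to evaluate $\langle\,,\,\rangle_{\lambda}$ on the distinguished vector $m_{\mathfrak{t}^{\lambda}}$ attached to the initial tableau $\mathfrak{t}^{\lambda}$, and more broadly to factor the Gram determinant of the form in the Murphy basis. One shows, following James--Murphy, that the diagonal entries and the determinant split into products of quantum integers $[h]=\frac{q^{h}-1}{q-1}$ indexed by hook lengths and row data; since $[h]=0$ exactly when $n\mid h$, the form degenerates completely precisely when some part of $\lambda$ is repeated at least $n$ times, i.e.\ exactly when $\lambda$ fails to be $n$-regular. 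Establishing this factorization — straightening each product $m_{\mathfrak{s}\mathfrak{t}}m_{\mathfrak{u}\mathfrak{v}}$ back into the Murphy basis and tracking the resulting quantum-integer coefficients — is the main obstacle; everything before and after it is formal cellular-algebra bookkeeping.

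Finally, to secure $(b)$ I would cross-check the count, confirming that the number of $n$-regular partitions of $r$ equals the number of simple modules. This is automatic from the cellular framework once the criterion is in hand, but it can be verified independently: the decomposition matrix of $H_q(\mathcal{S}_r)$ relative to the Specht modules is unitriangular for the dominance order, which forces the nonzero $D^{\lambda}$ to be linearly independent in the Grothendieck group and hence to exhaust the simple modules.
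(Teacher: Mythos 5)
The paper never proves this statement: it is imported as a known theorem of Dipper--James (with Mathas [4] cited only for notation), and Section 4 uses it as a black box. So your proposal is not competing with an argument in the paper; it is an attempt to prove the cited background result itself. Your cellular-algebra scaffolding is the standard modern route and is sound as far as it goes: the Murphy basis does make $H_{q}(\mathcal{S}_{r})$ cellular, the cell modules are the Specht modules, and the Graham--Lehrer formalism gives, purely formally, that the nonzero $D^{\lambda}=S^{\lambda}/\mathrm{rad}\,\langle\,,\,\rangle_{\lambda}$ are simple, pairwise non-isomorphic, and exhaust all simple modules. The reduction of the whole theorem to the criterion that $D^{\lambda}\neq 0$ if and only if $\lambda$ is $n$-regular is therefore correct.

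The gap is in your proposed proof of that criterion. You want to deduce it from the James--Murphy hook-length factorization of the Gram determinant, claiming the form ``degenerates completely precisely when some part of $\lambda$ is repeated at least $n$ times.'' That is a non sequitur: over a field, vanishing of the Gram determinant is equivalent to $\mathrm{rad}\,\langle\,,\,\rangle_{\lambda}\neq 0$, which is far weaker than the form being identically zero (that is, $D^{\lambda}=0$). The determinant vanishes for every $n$-regular $\lambda$ whose Specht module has nonzero radical, and the paper's own Theorem \ref{thm:Uno} supplies such examples: the relation $\left[ S^{(n-k,1^{k})} \right]=\left[ D^{(n-k,1^{k})} \right]+\left[ D^{(n-k+1,1^{k-1})} \right]$ shows that for $1\leq k\leq n-2$ the module $S^{(n-k,1^{k})}$ has nonzero radical, hence vanishing Gram determinant, even though $(n-k,1^{k})$ is $n$-regular and $D^{(n-k,1^{k})}\neq 0$. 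So no bookkeeping of hook lengths inside the determinant can detect $n$-regularity. The actual Dipper--James argument (reproduced in [4]) computes a single distinguished structure constant instead: writing the distinct parts of $\lambda$ with multiplicities $a_{1},a_{2},\dots$, one shows that a suitable product of Murphy-basis elements returns $x_{\lambda}$ with coefficient $\prod_{i}[a_{i}]_{q}!$ where $[a]_{q}!=[1][2]\cdots[a]$, a product of quantum factorials of \emph{multiplicities}, not of hook lengths; this is nonzero exactly when every $a_{i}<n$, i.e.\ exactly when $\lambda$ is $n$-regular, and when some $a_{i}\geq n$ one shows separately that every value of the form is a multiple of the vanishing quantum factorial. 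Replacing your Gram-determinant step by this computation would close the gap.
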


The following lemma holds in general.

\begin{lem}
Let $P^{\lambda}$ a indecomposable projective module corresponding to $D^{\lambda}$. \\
As a left module, 
\begin{equation*}
H_{q}(\mathcal{S}_{r}) \cong \displaystyle \bigoplus_{\lambda} ({\rm dim}D^{\lambda})P^{\lambda}
\end{equation*}
\end{lem}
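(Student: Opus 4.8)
The plan is to treat this as the standard structure theorem for a finite-dimensional algebra over a splitting field, applied to $H := H_{q}(\mathcal{S}_{r})$ over $\mathbb{C}$. Since $\mathbb{C}$ is algebraically closed it is a splitting field for $H$, so the endomorphism division ring of each simple module is $\mathbb{C}$ itself; this is the fact that will let the multiplicity come out as $\dim D^{\lambda}$ rather than as a coarser invariant. Because $H$ is finite-dimensional, hence left Artinian, the regular module ${}_{H}H$ is projective and decomposes into indecomposable summands, and the whole content of the lemma is to identify those summands and count them.

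First I would write $1 = \sum_{j} e_{j}$ as a sum of pairwise orthogonal primitive idempotents, giving ${}_{H}H = \bigoplus_{j} H e_{j}$ with each $H e_{j}$ indecomposable projective. Each indecomposable projective $P$ has simple top $P/\operatorname{rad}(H)P$, it is the projective cover of that top, and $P \mapsto \operatorname{top}(P)$ is a bijection between isomorphism classes of indecomposable projectives and isomorphism classes of simple modules; by the Dipper--James theorem above the latter are exactly the $D^{\lambda}$ for $n$-regular $\lambda$, with projective covers $P^{\lambda}$. Grouping the $H e_{j}$ by isomorphism type then gives ${}_{H}H \cong \bigoplus_{\lambda} m_{\lambda} P^{\lambda}$ for some multiplicities $m_{\lambda}$, and it remains to show $m_{\lambda} = \dim D^{\lambda}$.

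To pin down $m_{\lambda}$ I would apply the functor $\operatorname{top}(-) = (-)/\operatorname{rad}(H)(-)$, which is additive and sends $P^{\lambda}$ to $D^{\lambda}$; hence $m_{\lambda}$ equals the multiplicity of $D^{\lambda}$ in the semisimple module $\operatorname{top}({}_{H}H) = H/\operatorname{rad}(H)$. By Wedderburn--Artin, over the splitting field $\mathbb{C}$ one has $H/\operatorname{rad}(H) \cong \bigoplus_{\lambda} M_{d_{\lambda}}(\mathbb{C})$ with $d_{\lambda} = \dim D^{\lambda}$, and as a left module the block $M_{d}(\mathbb{C})$ is a direct sum of $d$ copies of its column module $D^{\lambda}$. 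Therefore $m_{\lambda} = d_{\lambda} = \dim D^{\lambda}$, which yields the claimed decomposition.

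The step I expect to be the only real subtlety is the identification $d_{\lambda} = \dim D^{\lambda}$ in the Wedderburn decomposition: it relies on $\mathbb{C}$ being a splitting field, so that each Wedderburn block is a full matrix algebra over $\mathbb{C}$ and its simple column module has dimension exactly the block size. Everything else --- lifting idempotents, existence and uniqueness of projective covers, and additivity of $\operatorname{top}$ --- is formal general-algebra input, so once the splitting-field hypothesis is recorded the proof is essentially a bookkeeping of these standard facts.
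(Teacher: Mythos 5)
Your proof is correct, and it is the standard argument. For comparison: the paper gives no proof of this lemma at all --- it is quoted as a known general fact (``The following lemma holds in general'') and then used as input for counting primitive idempotents --- so your write-up supplies background the author leaves implicit rather than paralleling an argument in the text. Your decomposition (regular module into indecomposable projectives, multiplicities read off from ${\rm top}({}_{H}H)=H/{\rm rad}(H)$ via Wedderburn--Artin) is exactly the argument one would cite. One refinement on the point you flag as the only subtlety: you are right that for a general finite-dimensional algebra the multiplicity of $P^{\lambda}$ is $\dim_{D_{\lambda}}D^{\lambda}$ with $D_{\lambda}={\rm End}_{H}(D^{\lambda})$, so the statement with ordinary $\dim D^{\lambda}$ needs the endomorphism rings of the simples to be the ground field. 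In the paper's setting this is automatic not only because the Brundan--Kleshchev identification places everything over $\mathbb{C}$ (with $q$ a root of unity), but also for a stronger intrinsic reason: Iwahori--Hecke algebras of type $A$ are split over every field --- in Dipper--James theory each $D^{\lambda}$ is absolutely irreducible --- which is why the author can assert the lemma ``in general'' even though the blanket assumption that $K$ is algebraically closed only appears later, in Section 5. So your splitting-field hypothesis is sufficient but can be dispensed with; otherwise the proof is complete.
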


The following property holds in this time [5].

\begin{thm}\label{thm:Uno}
As an element of Grothendieck group, 
\begin{itemize}
\item $\left[ D^{(n)} \right] = \left[ S^{(n)} \right]$
\item $\left[ D^{(n-k,1^{k})} \right] = -\left[ D^{(n-k+1,1^{k-1})} \right] + \left[ S^{(n-k,1^{k})} \right]$
\end{itemize}
\end{thm}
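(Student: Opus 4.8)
The plan is to recognise the two bullets as the decomposition matrix of a \emph{weight one} (defect one) block and to read them off its well-understood shape. First I would pin down which partitions are relevant. By the Brundan--Kleshchev--Rouquier isomorphism together with Theorem~\ref{prop:pidem}, the algebra $R_{n}$ is the block of $H_{q}(\mathcal{S}_{n})$ of content $\alpha=\alpha_{0}+\cdots+\alpha_{n-1}$, i.e.\ the block collecting those partitions of $n$ in which each residue modulo $n$ occurs exactly once. A short check shows these are precisely the hooks $(n-k,1^{k})$ with $0\le k\le n-1$: such a hook has arm residues $0,1,\dots,n-k-1$ and leg residues $n-1,n-2,\dots,n-k$, which exhaust $\{0,1,\dots,n-1\}$, whereas any non-hook repeats a residue. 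In particular this block has empty $n$-core and weight $1$, and its $n$-regular members are the hooks with $0\le k\le n-2$, giving the $n-1$ simple modules $D^{(n-k,1^{k})}$.

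The first bullet is immediate: $S^{(n)}$ is the trivial one-dimensional module, hence simple, so it coincides with its own top $D^{(n)}$ and $[D^{(n)}]=[S^{(n)}]$. For the second bullet I would combine two inputs. On one hand, James' submodule and regularisation theorems make the decomposition matrix lower unitriangular for the dominance order: $[S^{(n-k,1^{k})}:D^{\mu}]\neq 0$ forces $\mu\trianglerighteq(n-k,1^{k})$ and $[S^{(n-k,1^{k})}:D^{(n-k,1^{k})}]=1$. Since hooks are \emph{totally} ordered by dominance, the possible factors of $S^{(n-k,1^{k})}$ form the chain $D^{(n-k,1^{k})},D^{(n-k+1,1^{k-1})},\dots,D^{(n)}$. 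On the other hand, because the block has weight $1$ it is a Brauer tree algebra whose tree is a straight line ordered by dominance, so every Specht module in it is uniserial of composition length at most $2$, with factors coming from neighbouring vertices. Reading this off gives $[S^{(n-k,1^{k})}]=[D^{(n-k,1^{k})}]+[D^{(n-k+1,1^{k-1})}]$, which rearranges to the stated identity (and shows $S^{(1^{n})}$ has the single factor $D^{(2,1^{n-2})}$ at the non-regular end).

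The step I expect to be the real obstacle is proving that each hook Specht module has \emph{at most two} composition factors. Triangularity together with the dimension identity $\dim S^{(n-k,1^{k})}=\binom{n-1}{k}=\sum_{j\le k}[S^{(n-k,1^{k})}:D^{(n-j,1^{j})}]\,\dim D^{(n-j,1^{j})}$ is consistent with the two-term answer but does not force it, since a single linear constraint cannot separate several nonnegative multiplicities; this is exactly the content that the weight-one (Brauer-tree) classification, or the direct computation of~[5], supplies. As a consistency check, the resulting values $\dim D^{(n-k,1^{k})}=\binom{n-2}{k}$ satisfy $\binom{n-1}{k}=\binom{n-2}{k}+\binom{n-2}{k-1}$ and $\sum_{k=0}^{n-2}\binom{n-2}{k}=2^{n-2}$, matching the idempotent count of Theorem~\ref{prop:pidem} via the identity $H_{q}(\mathcal{S}_{n})\cong\bigoplus_{\lambda}(\dim D^{\lambda})P^{\lambda}$.
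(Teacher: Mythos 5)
Your argument is mathematically sound, but the comparison here is unusual: the paper offers no proof of this theorem at all — it is stated as a known fact with a citation to Uno~[5], which is precisely where these decomposition numbers for specialized Hecke algebras in defect one are computed. Your reconstruction (identifying the block of content $\alpha=\alpha_{0}+\cdots+\alpha_{n-1}$ as exactly the hooks $(n-k,1^{k})$, disposing of $[D^{(n)}]=[S^{(n)}]$ because $S^{(n)}$ is one-dimensional, then combining Dipper--James unitriangularity with the weight-one Brauer-tree structure to force $[S^{(n-k,1^{k})}]=[D^{(n-k,1^{k})}]+[D^{(n-k+1,1^{k-1})}]$) is a correct and standard route, and you are right to flag that the only non-elementary step is the bound of at most two composition factors per Specht module: triangularity plus the dimension count genuinely cannot force this, and that bound is exactly the content supplied by the cited reference or, equivalently, by the cyclic-defect theory for these blocks. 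So your proof and the paper's citation ultimately rest on the same input; what your version adds is the explicit derivation of both bullets from that structural fact, the identification of which partitions lie in the block (which the paper never spells out), and the observation that the convention $[D^{(1^{n})}]=0$ makes the second bullet valid at $k=n-1$, which is silently needed when Proposition 11 sums over $0\le k\le n-1$. Note also that no circularity is introduced relative to the rest of the paper: the Brauer-tree structure you invoke is the same external fact the paper itself uses in Section 5, not a consequence of this theorem.
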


By using hook length formula, the following property holds. 

\begin{prop}\label{prp:hook}
\begin{equation*}
{\rm dim} S^{(n-k,1^{k})} = \binom{n-1}{k}
\end{equation*}
\end{prop}

By using Thm.9 and Prop.10, the following property holds. 

\begin{prop}
For $0\leq k\leq n-1$, denote $\lambda_{k} = (n-k,1^{k})$. 
\begin{equation*}
\displaystyle \sum_{k=0}^{n-1} {\rm dim} D^{\lambda_{k}} = 2^{n-2}
\end{equation*}
\end{prop}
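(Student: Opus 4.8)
The plan is to turn the two identities of Theorem~\ref{thm:Uno}, which live in the Grothendieck group, into a numerical recurrence for the dimensions $d_k := \dim D^{\lambda_k}$ and then sum them. Since the dimension of a module depends only on its class in the Grothendieck group (it is additive along the classes appearing in those relations), I would simply apply $\dim$ to both identities. From the first, $d_0 = \dim S^{(n)} = \binom{n-1}{0} = 1$ by Proposition~\ref{prp:hook}. From the second, for $1 \le k \le n-1$,
\begin{equation*}
d_k = -\,d_{k-1} + \dim S^{(n-k,1^k)} = \binom{n-1}{k} - d_{k-1},
\end{equation*}
again using Proposition~\ref{prp:hook}. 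Thus the entire content of Theorem~\ref{thm:Uno} reduces, for the purpose of this count, to the single recurrence $d_k + d_{k-1} = \binom{n-1}{k}$ with initial value $d_0 = 1$.

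Next I would solve this recurrence in closed form. I claim $d_k = \binom{n-2}{k}$ for all $0 \le k \le n-1$, and I would prove it by induction on $k$. The base case $k=0$ reads $d_0 = \binom{n-2}{0} = 1$. For the inductive step, assuming $d_{k-1} = \binom{n-2}{k-1}$, the recurrence gives
\begin{equation*}
d_k = \binom{n-1}{k} - \binom{n-2}{k-1} = \binom{n-2}{k},
\end{equation*}
where the last equality is exactly Pascal's rule $\binom{n-2}{k-1} + \binom{n-2}{k} = \binom{n-1}{k}$. As a sanity check this forces each $d_k \ge 0$, consistent with its being a dimension, and it gives $d_{n-1} = \binom{n-2}{n-1} = 0$.

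Finally, summing the closed form yields
\begin{equation*}
\sum_{k=0}^{n-1} \dim D^{\lambda_k} = \sum_{k=0}^{n-1} \binom{n-2}{k} = \sum_{k=0}^{n-2} \binom{n-2}{k} = 2^{n-2},
\end{equation*}
where I discard the vanishing $k=n-1$ term and then apply the binomial theorem.

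As for where any difficulty lies: there is essentially no obstacle, since the argument is a routine induction once the Grothendieck-group identities are read as a numerical recurrence. The only points that deserve a line of care are the legitimacy of passing to dimensions (additivity on the Grothendieck group) and the boundary bookkeeping, namely that the top index $k=n-1$ contributes a vanishing binomial coefficient so that the partial sum completes to a full power of two. The one \emph{aha} step is recognizing that the recurrence is resolved precisely by Pascal's rule, but that is immediate.
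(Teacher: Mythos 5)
Your proof is correct, and its first half coincides exactly with the paper's: both apply dimensions to the Grothendieck-group identities of Theorem~\ref{thm:Uno} together with Proposition~\ref{prp:hook} to obtain the recurrence $d_{k}+d_{k-1}=\binom{n-1}{k}$ with $d_{0}=1$. Where you diverge is the finishing step. The paper never solves the recurrence; it groups the sum $\sum_{k=0}^{n-1}d_{k}$ into consecutive pairs, and since the number of terms has the parity of $n$, this forces a case split: for $n$ odd the sum becomes $1+\binom{n-1}{2}+\binom{n-1}{4}+\cdots+\binom{n-1}{n-1}$ (the even-indexed entries of row $n-1$ of Pascal's triangle), and for $n$ even it becomes $\binom{n-1}{1}+\binom{n-1}{3}+\cdots+\binom{n-1}{n-1}$ (the odd-indexed entries); either alternating-row sum equals $2^{n-2}$. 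You instead solve the recurrence in closed form, $d_{k}=\binom{n-2}{k}$, by induction via Pascal's rule, and then sum a full row of Pascal's triangle. Your route buys two things: it avoids the parity case distinction entirely, and it yields the stronger intermediate statement $\dim D^{(n-k,1^{k})}=\binom{n-2}{k}$ --- in particular $d_{n-1}=0$, consistent with $(1^{n})$ failing to be $n$-regular --- which the paper's pairing argument never makes explicit. The paper's version is marginally shorter since it needs no induction, but yours extracts more information from the same recurrence.
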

\begin{proof}
Since ${\rm dim}D^{\lambda_{k}}=-{\rm dim}D^{\lambda_{k-1}}+{\rm dim}S^{\lambda_{k}}$, \\
we obtain ${\rm dim}D^{\lambda_{k}}+{\rm dim}D^{\lambda_{k-1}}={\rm dim}S^{\lambda_{k}}=\binom{n-1}{k}$. \\
Therefore if $n$ is odd, 
\begin{equation*}
\displaystyle \sum_{k=0}^{n-1} {\rm dim} D^{\lambda_{k}} = 1+\binom{n-1}{2}+\binom{n-1}{4}+\cdots +\binom{n-1}{n-1}=2^{n-2}
\end{equation*}
if even, 
\begin{equation*}
\displaystyle \sum_{k=0}^{n-1} {\rm dim} D^{\lambda_{k}} = \binom{n-1}{1}+\binom{n-1}{3}+\cdots +\binom{n-1}{n-1}=2^{n-2}
\end{equation*}
\end{proof}

Therefore we obtain the following corollary.

\begin{cor}
Every $2^{n-2}$ ${\bf e}({\bf i})$s obtained in Thm.3 is primitive idempotent. 
\end{cor}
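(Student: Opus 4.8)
The plan is to read off a decomposition of the regular module from the idempotents of Theorem~\ref{prop:pidem}, to compare it with the decomposition furnished by the representation theory of the associated Hecke algebra, and to let the Krull--Schmidt theorem do the rest through a count of summands.

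First I would use Theorem~\ref{prop:pidem} to write the identity as $1=\sum_{{\bf i}\in I_{n}^{e}}{\bf e}({\bf i})$, a sum of $2^{n-2}$ pairwise orthogonal nonzero idempotents. This yields a direct sum decomposition of the left regular module,
\begin{equation*}
R_{n}=\bigoplus_{{\bf i}\in I_{n}^{e}} R_{n}{\bf e}({\bf i}),
\end{equation*}
into $2^{n-2}$ nonzero projective submodules. Since a summand ${\bf e}({\bf i})$ is primitive exactly when $R_{n}{\bf e}({\bf i})$ is indecomposable, the task reduces to showing that none of these $2^{n-2}$ summands splits further.

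Next I would bring in the representation theory. By Theorem~6 with $\Lambda=\Lambda_{0}$, the algebra $R_{n}$ is a single block of the Hecke algebra $H_{q}(\mathcal{S}_{n})$, and a computation of residue contents identifies the partitions in this block as exactly the hooks $\lambda_{k}=(n-k,1^{k})$ with $0\le k\le n-1$. Among these the $n$-regular ones, which by Theorem~7 index the simple modules, are $\lambda_{0},\dots,\lambda_{n-2}$, since $\lambda_{n-1}=(1^{n})$ repeats the part $1$ exactly $n$ times. Applying Lemma~8 to this block then presents the regular module as $R_{n}\cong\bigoplus_{k}(\dim D^{\lambda_{k}})\,P^{\lambda_{k}}$, a decomposition into indecomposable projectives whose number of summands, counted with multiplicity, is $\sum_{k}\dim D^{\lambda_{k}}=2^{n-2}$ by Proposition~11.

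Finally I would invoke Krull--Schmidt. Because $R_{n}$ is finite-dimensional, its decomposition into indecomposable modules is unique up to isomorphism and order, so any decomposition refines to one with exactly $2^{n-2}$ indecomposable summands. The decomposition $R_{n}=\bigoplus_{{\bf i}} R_{n}{\bf e}({\bf i})$ already has $2^{n-2}$ nonzero summands; each contributes at least one indecomposable summand, and the total cannot exceed $2^{n-2}$, so each $R_{n}{\bf e}({\bf i})$ must be indecomposable. Hence every ${\bf e}({\bf i})$ is primitive. The step requiring the most care is the matching of the two counts: one must verify that the $n$-regular partitions in the block of $R_{n}$ are precisely $\lambda_{0},\dots,\lambda_{n-2}$, so that the sum evaluated in Proposition~11 really is the number of indecomposable summands of the regular module. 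Once that identification is secured, the counting argument closes the proof at once.
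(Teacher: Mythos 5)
Your proof is correct and takes essentially the same route as the paper, which obtains this corollary (implicitly, via the single word ``Therefore'') from exactly the ingredients you assemble: the orthogonal decomposition $R_{n}=\bigoplus_{{\bf i}}R_{n}{\bf e}({\bf i})$ coming from Theorem 3, the decomposition $R_{n}\cong\bigoplus_{k}(\dim D^{\lambda_{k}})P^{\lambda_{k}}$ from Lemma 8 restricted to the block, the count $\sum_{k}\dim D^{\lambda_{k}}=2^{n-2}$ of Proposition 11, and a Krull--Schmidt comparison of the number of indecomposable summands. Your extra care in checking that the $n$-regular hooks are exactly $\lambda_{0},\dots,\lambda_{n-2}$ (excluding $(1^{n})$) is a detail the paper glosses over, but it refines rather than changes the argument.
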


The folloing property holds. 

\begin{prop}
If ${\bf e}({\bf i}) \neq 0$ then $y_{n}{\bf e}({\bf i}) \neq 0$.
\end{prop}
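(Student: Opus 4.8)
The plan is to determine the ``local'' corner algebra ${\bf e}({\bf i})R_{n}{\bf e}({\bf i})$ exactly for each ${\bf i}$ with ${\bf e}({\bf i})\neq 0$, and to read off $y_{n}{\bf e}({\bf i})\neq 0$ from a dimension count: this corner turns out to be $2$-dimensional, while on the other hand it is spanned by ${\bf e}({\bf i})$ and $y_{n}{\bf e}({\bf i})$ alone, so the latter cannot be zero.

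First I would bound $\dim {\bf e}({\bf i})R_{n}{\bf e}({\bf i})$ from above. Every element of $R_{n}$ is a combination of standard monomials $\psi_{w}y_{1}^{a_{1}}\cdots y_{n}^{a_{n}}{\bf e}({\bf i})$ with $w\in \mathcal{S}_{n}$, and since $\psi_{w}{\bf e}({\bf i})={\bf e}(w\cdot {\bf i})\psi_{w}$, such a monomial lies in the diagonal corner ${\bf e}({\bf i})R_{n}{\bf e}({\bf i})$ only when $w\cdot {\bf i}={\bf i}$. But ${\bf i}$ is a permutation of $(0,1,\cdots ,n-1)$, so its entries are pairwise distinct and $w\cdot {\bf i}={\bf i}$ forces $w=e$. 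Hence the corner is spanned by the pure $y$-monomials $y_{1}^{a_{1}}\cdots y_{n}^{a_{n}}{\bf e}({\bf i})$, and Prop.4 collapses these to just ${\bf e}({\bf i})$ and $y_{n}{\bf e}({\bf i})$ (using $y_{k}{\bf e}({\bf i})=0$ for $k<n$ and $y_{n}^{2}{\bf e}({\bf i})=0$). In particular $\dim {\bf e}({\bf i})R_{n}{\bf e}({\bf i})\leq 2$, and it would be $\leq 1$ should $y_{n}{\bf e}({\bf i})$ vanish.

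Next I would compute the same dimension representation-theoretically. By Cor.12 the idempotent ${\bf e}({\bf i})$ is primitive, so $R_{n}{\bf e}({\bf i})$ is an indecomposable projective, isomorphic to some $P^{\lambda_{k}}$, and
\[
\dim {\bf e}({\bf i})R_{n}{\bf e}({\bf i})=\dim \mathrm{End}_{R_{n}}(P^{\lambda_{k}})=[P^{\lambda_{k}}:D^{\lambda_{k}}],
\]
the diagonal Cartan number $c_{kk}$. From Thm.9 one reads $[S^{\lambda_{j}}]=[D^{\lambda_{j}}]+[D^{\lambda_{j-1}}]$ (with the convention $D^{\lambda_{-1}}=D^{\lambda_{n-1}}=0$), so each simple $D^{\lambda_{k}}$ is a composition factor, with multiplicity one, of exactly the two Specht modules $S^{\lambda_{k}}$ and $S^{\lambda_{k+1}}$; hence $c_{kk}=\sum_{\mu}d_{\mu k}^{2}=2$ for every $k$. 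Therefore $\dim {\bf e}({\bf i})R_{n}{\bf e}({\bf i})=2$, no matter which $\lambda_{k}$ is attached to ${\bf i}$.

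Comparing the two counts closes the argument: a $2$-dimensional space cannot be spanned by ${\bf e}({\bf i})$ alone, so $y_{n}{\bf e}({\bf i})\neq 0$. I expect the main obstacle to be the representation-theoretic identity $\dim {\bf e}({\bf i})R_{n}{\bf e}({\bf i})=c_{kk}$ together with extracting the decomposition matrix from Thm.9 --- in particular verifying that every column has exactly two nonzero entries, where the boundary cases $k=0$ and $k=n-2$ (the latter using $D^{\lambda_{n-1}}=0$) need a separate look. By contrast the upper bound is the soft observation that only the identity permutation can fix the multiplicity-free tuple ${\bf i}$, so no crossing diagram survives in a diagonal corner.
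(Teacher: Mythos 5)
Your proof is correct and takes essentially the same approach as the paper: both arguments pin down the corner algebra ${\bf e}({\bf i})R_{n}{\bf e}({\bf i})$, bounding it above by ${\rm span}\{{\bf e}({\bf i}),y_{n}{\bf e}({\bf i})\}$ via Prop.4 and below by a representation-theoretic count extracted from Thm.9. The only divergence is in the lower bound: you compute the diagonal Cartan invariant exactly, $c_{kk}=\sum_{\mu}d_{\mu k}^{2}=2$, via Brauer reciprocity $C=D^{\rm tr}D$, whereas the paper settles for $\dim {\rm End}({\bf e}({\bf i})H_{n})\geq 2$ from the absence of simple projective modules --- an inference that silently uses that the Hecke algebra is symmetric (an indecomposable non-simple projective then has isomorphic top and socle, forcing the diagonal Cartan number to be at least $2$), so your version is sharper but leans on a reciprocity statement the paper never invokes.
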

\begin{proof}
There are no elements except for $y_{n}{\bf e}({\bf i})$ in ${\bf e}({\bf i})H_{n}{\bf e}({\bf i})$ 
such that linearly independent to ${\bf e}({\bf i})$.
On the other hand, since there are no indecomposable simple projective modules by Thm.9, 
${\rm dim}({\rm End}({\bf e}({\bf i})H_{n}))\geq 2$. 
Hence $y_{n}{\bf e}({\bf i})\neq 0$ from ${\rm End}({\bf e}({\bf i})H_{n})\cong {\bf e}({\bf i})H_{n}{\bf e}({\bf i})$. 
\end{proof}

\section{Quivers and relations}

Assume $K$ is algebraic closed, then there is a fact ; \\ \hspace{3cm}
$R_{n}$ is Morita equivalent to Brauer tree algebra of $A_{n}$ type. 

\medskip

Hence we can interpret $R_{n}$ using a quiver and relations. 

Let $Q_{1}$ and $Q_{n} (n>1)$ be a quiver as follows;

\hspace{3cm}
\includegraphics[width=10cm,angle=0,keepaspectratio,clip]{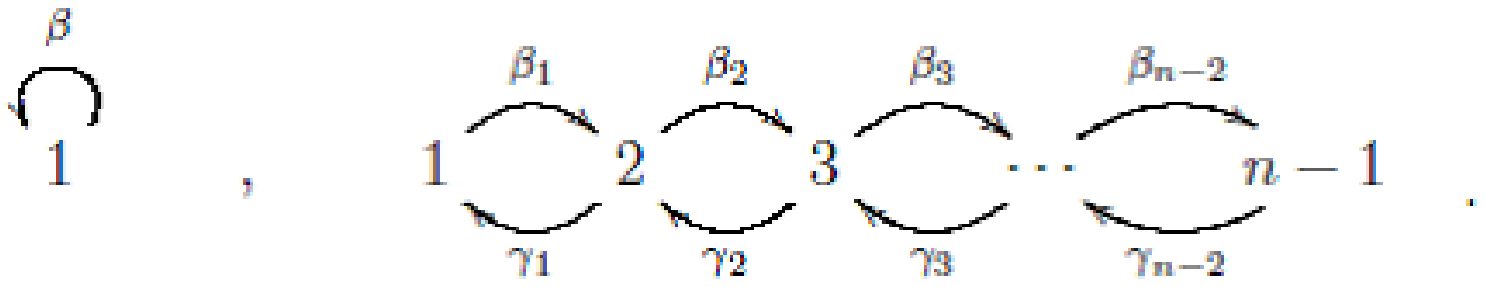}

And rerations be as follows; \\ \hspace{15mm}
$\beta^{2}, \beta_{i} \beta_{i+1}, \gamma_{i+1} \gamma_{i}, 
\gamma_{i} \beta_{i} - \beta_{i+1} \gamma_{i+1} (1\leq i \leq n-3).$

\medskip

Since $R_{2}$ has only one primitive idempotent ${\bf e}(0,1)$, 
it is corresponding to vertex 1 of the quiver, 
and $y_{2}{\bf e}(0,1)$ is corresponding to $\beta$. 
Since $y_{2}^{2}{\bf e}(0,1)=0$, it suffices the relation. 

Since the number of vertices of a quiver coinsides with
the number of isomorphic classes of indecomposable projective modules, 
we can classify $2^{n-2}$ primitive idempotents of $R_{n}$. 

\begin{prop}
For each $R_{n}$, the two indecomposable projective modules corresponding to 
two primitive idempotents ${\bf e}({\bf i})$ and ${\bf e}({\bf j})$ are isomorphic 
if and only if $i_{n}=j_{n}$. 

In particular, the isomorphic class of indecomposable projective modules has $(n-1)$ elements. 
\end{prop}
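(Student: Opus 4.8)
The plan is to combine the explicit description of the nonzero idempotents from the proof of Theorem~\ref{prop:pidem} with the quadratic $\psi$-relation to construct the isomorphisms, and then to pin down the number of classes by a counting argument. First I would recast Theorem~\ref{prop:pidem} combinatorially: a sequence with ${\bf e}({\bf i})\neq 0$ is exactly an ``arc-building'' sequence on the cycle $\{0,1,\dots,n-1\}$, i.e. $i_1=0$ and each later $i_k$ is a neighbour of the contiguous arc $\{i_1,\dots,i_{k-1}\}$, so the chosen vertices always form an arc through $0$. The last entry $i_n$ is then the unique omitted vertex, and it ranges over all of $1,\dots,n-1$; once $i_n=m$ is fixed, the first $n-1$ entries are an arbitrary shuffle of the clockwise run $1,2,\dots,m-1$ and the counter-clockwise run $n-1,n-2,\dots,m+1$, so the fibre over $i_n=m$ has $\binom{n-2}{m-1}$ elements. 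In particular there are exactly $n-1$ nonempty fibres.

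For the ``if'' direction I would produce isomorphisms from the transposition generators. When two consecutive entries satisfy $i_k\not\leftrightarrow i_{k+1}$ we have $\psi_k^2{\bf e}({\bf i})={\bf e}({\bf i})$, and right multiplication by $\psi_k{\bf e}({\bf i})$ and by $\psi_k{\bf e}(s_k{\bf i})$ are mutually inverse left-module maps between $R_n{\bf e}(s_k{\bf i})$ and $R_n{\bf e}({\bf i})$ (their composite is right multiplication by $\psi_k^2{\bf e}(\cdot)={\bf e}(\cdot)$), whence $R_n{\bf e}({\bf i})\cong R_n{\bf e}(s_k{\bf i})$. In the shuffle picture the swappable pairs $i_k\not\leftrightarrow i_{k+1}$ are precisely the consecutive \emph{unlike} moves (one clockwise, one counter-clockwise): the two entries are the two endpoints just added to an arc of some size $v\le n-3$, and these endpoints differ by $v+1\in\{2,\dots,n-2\}$ around the cycle, hence are non-adjacent. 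Such a swap keeps the sequence valid and fixes $i_n$, and since adjacent transpositions of unlike letters suffice to sort any binary word into $R^{m-1}L^{\,n-1-m}$, every word in a fibre is linked to this common one. Thus all ${\bf e}({\bf i})$ with the same $i_n$ give isomorphic $R_n{\bf e}({\bf i})$.

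For the ``only if'' direction I would count. By Section~4 the simple modules are exactly $D^{\lambda_0},\dots,D^{\lambda_{n-2}}$ (the $n$-regular hooks, $\lambda_{n-1}=(1^n)$ being excluded), so there are exactly $n-1$ isomorphism classes of indecomposable projectives, and by the decomposition of the regular module (Lemma~8) each occurs in $R_n=\bigoplus_{{\bf i}}R_n{\bf e}({\bf i})$ with positive multiplicity $\dim D^{\lambda_k}$; indeed the recurrence of Theorem~\ref{thm:Uno} together with Proposition~\ref{prp:hook} and Pascal's rule gives $\dim D^{\lambda_k}=\binom{n-2}{k}$, matching the fibre sizes. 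Hence the $n-1$ fibres each map into a single class and all $n-1$ classes are realised, so on cardinality grounds the assignment fibre $\mapsto$ class is a bijection; in particular $i_n\neq j_n$ forces $R_n{\bf e}({\bf i})\not\cong R_n{\bf e}({\bf j})$, and the ``$n-1$ elements'' assertion follows.

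I expect the main obstacle to be the middle step: checking uniformly in $n$ that ``consecutive unlike moves'' always give a non-adjacent (hence invertible) $\psi_k$ and that these swaps connect an entire fibre — that is, controlling the arc-endpoint arithmetic on the cycle. The boundary case $n=3$ is degenerate but harmless, since then every fibre is a singleton and no swaps are needed.
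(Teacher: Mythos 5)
Your proof is correct and takes essentially the same route as the paper: partition the nonzero idempotents by $i_n$, connect each fibre by swaps at quiver-non-adjacent consecutive entries (the paper invokes [1] Prop.~2.13 for the resulting isomorphism $R_n{\bf e}({\bf i})\cong R_n{\bf e}(s_k{\bf i})$, where you prove it directly via the mutually inverse multiplications by $\psi_k$), then use the representation-theoretic count of $n-1$ isomorphism classes to force the fibre-to-class assignment to be a bijection. The differences are purely expository — you make explicit the arc-endpoint arithmetic and the computation $\dim D^{\lambda_k}=\binom{n-2}{k}$ that the paper leaves implicit.
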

\begin{proof}
It is sufficient to decompose $2^{n-2}$ primitive idempotents to $(n-1)$ sets.
Let $i_{n}=k$ for ${\bf i} \in I^{e}_{n}$. 
We can get ${\bf i}_{k} := (0,1,2,\cdots ,k-1,n-1,n-2,\cdots ,k+1) \in I^{e}_{n}$ 
by swapping $i_{k}$ and $i_{k+1}$ for some $2 \leq k \leq n-2$ 
such that $i_{k} \not = i_{k}\pm 1$ repeatedly. 
Hence if ${\bf e}({\bf i}) \not =0$,${\bf e}({\bf j})\not =0$ and $i_{n}=j_{n}$, 
we can obtain ${\bf i}$ from ${\bf j}$ by swapping restricted as above. 
Then we obtain $R_{n}{\bf e}({\bf i}) \cong R_{n}{\bf e}({\bf j})$ by using [1] Prop.2.13. 
Hence there are at most $(n-1)$ isomorphic classes that is characterized by the last number. 
\end{proof}

Since we can take ${\bf e}({\bf i}_{k})$s as representative elements of isomorphic classes, 
each ${\bf e}({\bf i}_{k})$ is corresponding to the vertex $k$ of $Q_{n}$. 

We can easily specify (but be careful about signs) 
what are corresponding to arrows $\beta_{t}$, $\gamma_{t}$ 
with a notation $\ _{\bf i}1_{\bf j}$ from [1] in the proof of Thm.2.5; 
$\ _{\bf i_{t}}1_{\bf i_{t+1}}$, $(-1)^{t} \ _{\bf i_{t+1}}1_{\bf i_{t}}$ are corresponding to 
$\beta{t}$, $\gamma{t}$. 

We can easily confirm $(-1)^{t}\ _{\bf i_{t+1}}1_{\bf i_{t}} \ _{\bf i_{t}}1_{\bf i_{t+1}} = 
\ _{\bf i_{t+1}}1_{\bf i_{t+2}} (-1)^{t+1} \ _{\bf i_{t+2}}1_{\bf i_{t+1}} = 
(-1)^{t} y_{t}{\bf e}({\bf i}_{t+1})$. 
Notice that in $\ _{\bf i_{t}}1_{\bf i_{t+1}} \ _{\bf i_{t+1}}1_{\bf i_{t+2}}$, 
the strands colored by $t,t+1,t+2$ are twisted each other, 
and there must be $\psi_{m} \psi_{m+1} \psi_{m}$ which is never appeared in $R_{n}$; 
if it appears, three colors appears at mth which is never happen by Thm.3. 
Then $\ _{\bf i_{t}}1_{\bf i_{t+1}} \ _{\bf i_{t+1}}1_{\bf i_{t+2}}=0$, 
and similarly $\ _{\bf i_{t+2}}1_{\bf i_{t+1}} \ _{\bf i_{t+1}}1_{\bf i_{t}}=0$. 

\section{The dimension of $R_{n}$}

\begin{prop}
${\rm dim}_{K}R_{n}=\binom{2(n-1)}{n-1}$
\end{prop}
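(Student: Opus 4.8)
The plan is to compute $\dim_K R_n$ entirely from the representation theory assembled in Section 4, reducing it to the Specht dimensions of Proposition 10. Via Theorem 6 the algebra $R_n$ is the block of $H_q(\mathcal{S}_n)$ whose simple modules are the $D^{\lambda_k}$ for the $n$-regular hooks among $\lambda_0,\dots,\lambda_{n-1}$; since $\lambda_{n-1}=(1^n)$ is not $n$-regular, these are $D^{\lambda_0},\dots,D^{\lambda_{n-2}}$, with associated indecomposable projectives $P^{\lambda_0},\dots,P^{\lambda_{n-2}}$ (the expected $n-1$ of them). Applying Lemma 8 to this block gives
\begin{equation*}
{\rm dim}_K R_n=\sum_{k=0}^{n-2}({\rm dim}\,D^{\lambda_k})({\rm dim}\,P^{\lambda_k}),
\end{equation*}
so the task reduces to controlling the projective dimensions.

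Next I would use that $H_q(\mathcal{S}_n)$ is cellular (indeed symmetric), so that each indecomposable projective $P^{\lambda_k}$ admits a Specht filtration in which the multiplicity of $S^{\lambda_j}$ equals the decomposition number $d_{jk}=[S^{\lambda_j}:D^{\lambda_k}]$. This reciprocity yields ${\rm dim}\,P^{\lambda_k}=\sum_{j=0}^{n-1}d_{jk}\,{\rm dim}\,S^{\lambda_j}$. Substituting and interchanging the order of summation,
\begin{equation*}
{\rm dim}_K R_n=\sum_{j=0}^{n-1}{\rm dim}\,S^{\lambda_j}\left(\sum_{k=0}^{n-2}d_{jk}\,{\rm dim}\,D^{\lambda_k}\right)=\sum_{j=0}^{n-1}({\rm dim}\,S^{\lambda_j})^2,
\end{equation*}
where the inner sum collapses because $[S^{\lambda_j}]=\sum_{k}d_{jk}[D^{\lambda_k}]$ in the Grothendieck group forces $\sum_k d_{jk}\,{\rm dim}\,D^{\lambda_k}={\rm dim}\,S^{\lambda_j}$. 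Equivalently one may phrase this as $C=D^{\mathsf T}D$ for the Cartan and decomposition matrices, the bidiagonal shape of $D$ being readable off Theorem 9; but notice that the explicit entries are not actually needed for the final count.

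Finally I would insert Proposition 10. Since ${\rm dim}\,S^{\lambda_k}=\binom{n-1}{k}$, the formula becomes ${\rm dim}_K R_n=\sum_{k=0}^{n-1}\binom{n-1}{k}^2$, and Vandermonde's identity --- the coefficient of $x^{n-1}$ in $(1+x)^{n-1}(1+x)^{n-1}=(1+x)^{2(n-1)}$ --- evaluates this to $\binom{2(n-1)}{n-1}$, as claimed. I expect the crux to be the reciprocity invoked in the second step: that the projectives of this block carry Specht filtrations with the stated multiplicities. This is the one input not already isolated in Section 4, and it is exactly where the cellular (or symmetric-algebra) structure of $H_q(\mathcal{S}_n)$, in the sense of Mathas, enters; everything else is bookkeeping with Lemma 8, a Grothendieck-group identity, Proposition 10, and a binomial sum. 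As a sanity check the formula gives $2,6,20$ for $n=2,3,4$, matching direct computation (for instance $R_2$ has basis $\{{\bf e}(0,1),\,y_2{\bf e}(0,1)\}$).
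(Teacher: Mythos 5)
Your proof is correct, but it takes a genuinely different route from the paper's. The paper counts an explicit basis: using the Brauer-tree quiver presentation of Section 5 and the fact that the isomorphism class of projectives labelled by $i_{n}=k$ contains $\binom{n-2}{k-1}$ of the $2^{n-2}$ primitive idempotents, it enumerates the nonzero elements $\ _{\bf i}1_{\bf j}$ by type (vertex type and $\gamma_{t}\beta_{t}$ type, which require $i_{n}=j_{n}$; $\beta_{t}$ and $\gamma_{t}$ types, which require $i_{n}=j_{n}\pm 1$), obtaining $2\sum_{k=1}^{n-1}\binom{n-2}{k-1}^{2}+2\sum_{k=1}^{n-2}\binom{n-2}{k-1}\binom{n-2}{k}$, and then simplifies by Pascal's rule to the same sum $\sum_{k=0}^{n-1}\binom{n-1}{k}^{2}$ that you reach. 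You instead stay entirely on the Hecke-algebra side of Theorem 6: Lemma 8 applied to the block, plus cellular-algebra reciprocity ($C=D^{\mathsf T}D$, equivalently Specht filtrations of projectives with decomposition-number multiplicities), collapses the block's dimension to $\sum_{j}(\dim S^{\lambda_{j}})^{2}$, and Proposition 10 with Vandermonde finishes. Your one input beyond what Section 4 isolates is exactly that reciprocity; you flag it correctly, and it is standard (it is in Mathas [4], the paper's own reference). You also use implicitly, as the paper does throughout Section 4, that the block determined by $\alpha=\sum_{i}\alpha_{i}$ contains precisely the hook Specht modules $S^{(n-k,1^{k})}$ --- this follows from the residue-content classification of blocks, since any non-hook partition of $n$ contains the node $(2,2)$ and hence repeats residue $0$. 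The trade-off: your argument is shorter, sign-free, and in fact proves the general principle that a block of a cellular algebra has dimension equal to the sum of the squares of the dimensions of its cell modules; the paper's count is more laborious but yields strictly more information, namely an explicit basis of $R_{n}$ rather than just its dimension.
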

\begin{proof}
For $n=2$, there are only $2$ linearly independent elements; ${\bf e}_{(0,1)}$ and 
$y_{2}{\bf e}_{(0,1)}$. 

Note that there are $\binom{n-2}{k-1}$ elements in an isomorphic class consisting ${\bf i}_{k}$. 

As we have shown in the previos section, there are only $4$ types of elements 
corresponding to vertices, $\beta_{t}$, $\gamma_{t}$ and 
$\gamma_{t} \beta_{t}=\beta_{t+1} \gamma_{t+1}$. 

The only difference between the first and the last one is that 
there is a dot on the $n$th strand or not; 

\vspace{-2mm} \hspace{5cm}
\includegraphics[width=5cm,angle=0,keepaspectratio,clip]{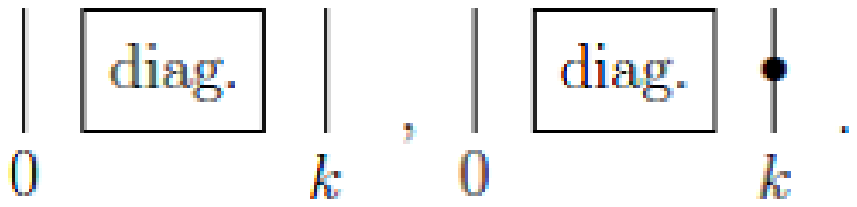}

And the shape of these diagrams is just $\ _{{\bf i}}1_{{\bf j}}$ 
such that $i_{n}=j_{n}=k$, hence there are 
$2 \displaystyle \sum_{k=1}^{n-1} \binom {n-2} {k-1} ^{2}$ diagrams. 

\medskip

Second one and third one has one to one correspondense by horizontal mirroring. 
Since $\ _{{\bf i}}1_{{\bf j}}\ (i_{n}=j_{n}\pm 1)$ has $\psi_{n-1}$, 
there are no dots on this diagram. 
Hence there are 
$2 \displaystyle \sum_{k=1}^{n-2} \binom {n-2} {k-1} \binom {n-2} {k}$ diagrams. 

Finally we obtain following equation; 

\begin{eqnarray*}
2 \displaystyle \sum_{k=1}^{n-1} \binom {n-2} {k-1} ^{2} +2 \displaystyle \sum_{k=1}^{n-2} \binom {n-2} {k-1} \binom {n-2} {k}
& = & 
\displaystyle \sum_{k=1}^{n-2} \left\{ \binom {n-2} {k-1} + \binom {n-2} {k} \right\}^{2} +2 \\
 & = & \displaystyle \sum_{k=1}^{n-2} \binom {n-1} {k} ^{2} +2 \\
 & = & \displaystyle \sum_{k=0}^{n-1} \binom {n-1} {k} ^{2} \\
 & = & \displaystyle \binom {2(n-1)} {n-1} .\\
\end{eqnarray*}

\end{proof}

\begin{ex}

For $n=3$, we have $\binom {4} {2} = 6$ diagrams as follows; 

\hspace{3cm}
\includegraphics[width=10cm,angle=0,keepaspectratio,clip]{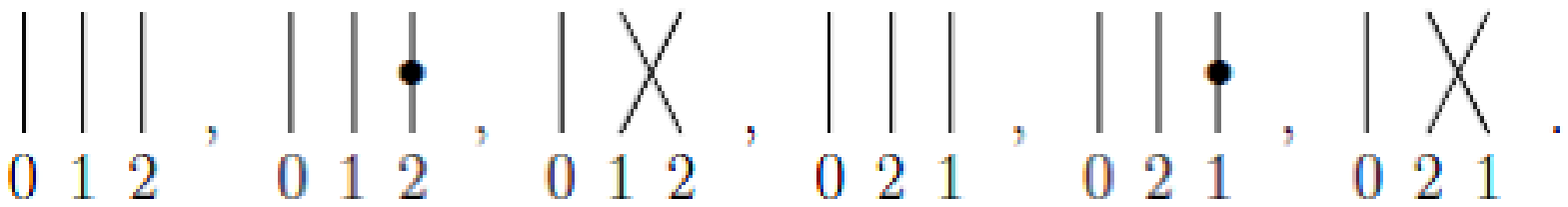}

\end{ex}

 
\ifx\undefined\bysame 
\newcommand{\bysame}{\leavevmode\hbox to3em{\hrulefill}\,} 
\fi

\end{document}